\newtheorem{proposition}{Proposition}
\newtheorem{remark}{Remark}
\newcommand{\R}{\mathbb R}
\DeclareMathOperator*{\argmax}{\arg\max}
\DeclareMathOperator*{\argmin}{\arg\min}
\newcommand{\reviewerA}[1]{{#1}}
\newcommand{\reviewerB}[1]{{#1}}
\newcommand{\reviewerCommon}[1]{{#1}}
\title{Neural Empirical Interpolation Method for Nonlinear Model Reduction}
\author[M. Hirsch]{Max Hirsch}
\address[MH]{Department of Mathematics, University of California Berkeley, Berkeley, CA 94720, USA.}
\email[]{mhirsch@berkeley.edu}
\author[F. Pichi]{Federico Pichi}
\address[FP]{mathLab, Mathematics Area, SISSA, via Bonomea 265, I-34136 Trieste, Italy.}
\email[]{fpichi@sissa.it}
\author[J. S. Hesthaven]{Jan S. Hesthaven}
\address[JSH, FP]{MCSS, École Polytechnique Fédérale de Lausanne, 1015 Lausanne, Switzerland.}
\email[]{jan.hesthaven@epfl.ch}
\begin{document}

\begin{abstract}
  In this paper, we introduce the neural empirical interpolation method (NEIM), a neural network-based alternative to the discrete empirical interpolation method for reducing the time complexity of computing the nonlinear term in a reduced order model (ROM) for a parameterized nonlinear partial differential equation. NEIM is a greedy algorithm which accomplishes this reduction by approximating an affine decomposition of the nonlinear term of the ROM, where the vector terms of the expansion are given by neural networks depending on the ROM solution, and the coefficients are given by an interpolation of some ``optimal'' coefficients. Because NEIM is based on a greedy strategy, we are able to provide a basic error analysis to investigate its performance. NEIM has the advantages of being easy to implement in models with automatic differentiation, of being a nonlinear projection of the ROM nonlinearity, of being efficient for both nonlocal and local nonlinearities, and of relying solely on data and not the explicit form of the ROM nonlinearity. We demonstrate the effectiveness of the methodology on solution-dependent and solution-independent nonlinearities, a nonlinear elliptic problem, and a nonlinear parabolic model of liquid crystals.
\end{abstract}

\maketitle

\begin{center}
    \textbf{Code availability:} \url{https://github.com/maxhirsch/NEIM}
\end{center}

\section{Introduction}
\label{sec:introduction}

In the numerical solution of Partial Differential Equations (PDEs) \cite{QuarteroniNumericalApproximationPartial1994}, it is often necessary to study the behavior of the physics-based model corresponding to many different parameter configurations. This is the case, for example, in inverse problems, design, and uncertainty quantification, where physical or geometric properties of the problem need to be investigated. Approximating the solutions of the PDE via high-fidelity simulations usually involves solving a system of equations of large dimensionality, and obtaining many-query and real-time simulations becomes prohibitively expensive. Reduced order modeling (ROM) \cite{BennerSnapshotBasedMethodsAlgorithms2020, BennerSystemDataDrivenMethods2021} consists of a wide class of approaches in which such solutions are approximated via surrogate models with lower dimensional systems of equations. Because of this reduced or latent representation, the system can be solved more efficiently, allowing for a reduction of the computational cost.
ROM techniques are divided into an offline stage and an online stage, where the information collected via high fidelity solvers during the former is exploited for fast online evaluations. This is possible thanks to the \textit{reduced basis} space, a low-dimensional space spanned by an orthonormal basis which can be obtained via well-known approaches such as the Greedy method or the Proper Orthogonal Decomposition (POD) \cite{hesthavenCertifiedReducedBasis2015,QuarteroniReducedBasisMethods2016}. Within the POD-based model order reduction framework, the high fidelity or full-order approximation of the solution is computed and stored offline for several parameter instances, forming the so-called \textit{snapshots}. Then, a singular value decomposition of the snapshots matrix is performed to retain the hopefully few left singular vectors capturing most of the system's behavior. A Galerkin projection is then performed on the subspace encoded by the extracted reduced basis, and the high fidelity problem is reformulated in a lower dimensional space. In the online stage, the lower dimensional problem can be solved efficiently and accurately for many different parameter configurations.

To obtain such computational speedup, two main assumptions need to be satisfied: (i) the problem is reducible, meaning that we can compress the information with a small number of basis functions, i.e., its Kolmogorov n-width has a fast decay\reviewerCommon{,} and (ii) we can efficiently assemble the parameter-dependent quantities thanks to the affine-decomposition property, allowing us to have an online stage that does not depend on the potentially high number of degrees of freedom of the problem.  The former is related to the physical complexity of the model, while the latter can be satisfied choosing ad-hoc strategies to restore the computational savings.

Indeed, when dealing with linear PDEs with affine parameter dependence, the POD-Galerkin approach results in a linear system of equations of dimension much smaller than the original problem, leading to consistent speedups. However, when the PDE is nonlinear, POD-Galerkin will not in general result in significant speedups because the time complexity of evaluating the nonlinearity in the reduced order system of equations still depends on the dimensionality of the high fidelity problem. 

For this reason, \textit{hyper-reduction} techniques have been developed to satisfy the assumption (ii) above, improving the time complexity of evaluating nonlinearities thanks to a further level of model reduction.
Among the most common hyper-reduction techniques, there are the Empirical Interpolation Methods (EIM) \cite{BarraultEmpiricalInterpolationMethod2004} and its variant the Discrete Empirical Interpolation Methods \cite{DEIM}. In DEIM, one performs an oblique projection onto a subspace of the column space of nonlinearities computed in the offline phase. This projection is chosen in such a way that only a smaller number of entries of the nonlinearity need to be computed in the online phase, and for this reason, it allows for computational cost reduction when dealing with componentwise nonlinearities. \reviewerA{For nonlocal nonlinearities, in which the components of the nonlinearity depend on every input, e.g.\ integral kernels or other integral operators like the Boltzmann collision operator \cite{BoltzmannDLR2025}%nonlinear terms such as $\operatorname{N}:\R^n\to\R^n$ defined as $ \operatorname{N}(v)_i = (Cv)_i^3$,
% \begin{equation}
% \label{eq:nonlocal-nonlinearity}
    % \operatorname{N}(v)_i = (Cv)_i^3,
% \end{equation}
%for some $C\in\R^{n\times n}$
}, 
exploiting DEIM \reviewerCommon{would} not be effective. Moreover, it can only be applied if one has access to the exact form of the nonlinearity in the system\reviewerA{, and if this nonlinearity is not programmed to be compatible with automatic differentiation \cite{AutoDiff2017}, it is not easily used in workflows requiring deep learning. This is the case, for example, in existing reduced order modeling libraries like RBniCS \cite{RozzaBallarinScandurraPichi2024}.}

In this paper, we develop a data-based hyper-reduction machine-learning technique for ROM\reviewerA{, compatible with automatic differentiation,} which we call the Neural Empirical Interpolation Method (NEIM).
The idea of NEIM is to approximate an affine decomposition of the nonlinearity in a reduced order model so that the nonlinear term can be written as the sum of the product between parameter-dependent coefficients and solution-dependent neural networks. This approximation is done with a greedy approach. Given a fixed set of parameters, NEIM first finds the parameter corresponding to the reduced order nonlinearity with highest approximate $L^2$ error. A neural network is then trained to approximate a normalized version of this nonlinearity, and the coefficient of the first term in the affine decomposition is determined by solving a linear system of equations. This is repeated iteratively, with the $L^2$ error computed relative to the current approximation of the affine decomposition. At the end of this iterative procedure, the approximations for the coefficients in the affine decomposition are interpolated from the fixed set of parameters to the whole parameter space. Because of the greedy strategy, we are able to perform a basic error analysis to investigate the performance of NEIM. Namely, we can show that the error estimate used in the algorithm is decreasing with each iteration. We also show how the final generalization error is composed of projection, training, and interpolation errors. 
NEIM has the advantage over DEIM of being a completely data-driven approach\reviewerA{, which gives NEIM the main advantage of being a far less intrusive method and more easily integrated with other deep learning techniques. Indeed, in the online phase of solving a reduced order model with NEIM, the nonlinear terms in the original full order model are never evaluated, unlike DEIM which still evaluates the nonlinearity at selected rows. Thus, the full order model code does not need to be changed to exploit \reviewerCommon{the} NEIM strategy. Moreover, unlike DEIM which involves a linear projection  and deals with componentwise nonlinearities, NEIM is a nonlinear methodology which is also suitable for the previously described nonlocal nonlinearities.} 
% \reviewerA{such as the one presented in Equation \eqref{eq:nonlocal-nonlinearity}}. 

We demonstrate the effectiveness of our methodology on solution-independent and solution-dependent nonlinearities, a nonlinear elliptic problem, and a nonlinear parabolic model of liquid crystals. In the solution-independent test case, we use NEIM to approximate a nonlinearity which depends only on the PDE parameter $\mu$ and not the PDE solution. In this case, the neural networks in the approximation should be approximately constant. In the solution-dependent test case, we modify the nonlinearity in the first example so that the neural networks should learn non-constant functions. Each of these first two methods are based on a finite difference method. The third example presents an application of NEIM to a nonlinear elliptic problem which is solved with a physics-informed neural network (PINNs) \cite{RaissiPhysicsinformedNeuralNetworks2019} in the ROM context \cite{ChenPhysicsinformedMachineLearning2021}. We then present an application of the methodology to a system of PDEs describing the dynamics of liquid crystal molecules which is discretized with finite elements and has two nonlinearities to approximate.
In addition to the advantages of the proposed methodology, NEIM can easily be used in combination with automatic differentiation in deep learning libraries like PyTorch \cite{PyTorch}. This is unlike DEIM in finite element settings in which the assembly of the nonlinearity is not easy to interface between deep learning libraries and finite element libraries. We show the compatibility of NEIM with automatic differentiation in our example with a physics-informed neural network. Furthermore, without relying on a complete approximation of the nonlinear field, NEIM provides more control and interpretability, also thanks to the developed error analysis. Lastly, these architectures are by definition small and fast to train and evaluate, allowing for their usage in standard reduction settings.

There exist several machine learning approaches to model order reduction for parameterized PDEs which tackle the hyper-reduction issue from a different perspective. These can be classified into linear and nonlinear approaches, depending on the maps exploited to approximate the solution field. As an example, neural networks \cite{HesthavenNonintrusiveReducedOrder2018,PichiArtificialNeuralNetwork2023,ChenPhysicsinformedMachineLearning2021}, interpolation \cite{Bui2003, Demo2019}, and Gaussian process regression \cite{Guo2018,Kast2020} have been used to map PDE parameters to coefficients of a linear basis for a reduced order subspace.
The focus of such works, however, is on approximating the entire parameter-solution mapping corresponding to the reduced order problem and not on designing a hyper-reduction technique to be used online.
On the other hand, nonlinear compression has gained much interest to deal with, e.g.,\ advection-dominated problems \cite{Diez2021,Taddei2020,TorloModelReductionAdvection2020,KhamlichOptimalTransportinspiredDeep2023}, and more generally to overcome the Kolmogorov barrier \cite{Ohlberger2015} by exploiting autoencoder architectures \cite{Lee2020, Romor2023, Fresca2021,PichiGraphConvolutionalAutoencoder2024,MorrisonGFNGraphFeedforward2024a}.
Since the complex parameter-solution mapping in the nonlinear setting is usually obtained through deep neural networks, the approximated map can be subject to overfitting, require many data to be trained, and lack interpretability.
The NEIM approach is also closely related to very recent works exploring the usage of empirical interpolation techniques in combination with neural networks \cite{AntilNoteDimensionalityReduction2023,CicciDeepHyROMnetDeepLearningBased2022}, and to the development of adaptive methods of constructing  suitable basis functions \cite{AmsallemNonlinearModelOrder2012,FrancoDeepOrthogonalDecomposition2024,GeelenLocalizedNonintrusiveReducedorder2022,demo2024a}.
Lastly, there are more general machine learning approaches to approximating nonlinear operators in scientific computing \cite{Kovachki2021,BoulleMathematicalGuideOperator2023}. In particular, DeepONets are a popular method used for this purpose \cite{Lu2021}. In the most basic setting, DeepONets can be thought of as having one neural network which takes in a parameter and another network which takes the evaluations of a function at various points in space. The output of the DeepONet is the dot product of the outputs of these two networks.
From this perspective, NEIM can be thought of structurally as a specific instance of DeepONet. The difference between NEIM approximations and DeepONets is the way in which NEIM is fit to data in the offline phase. NEIM relies on a greedy method of selecting parameters corresponding to individual terms in the approximation sum. In effect, it is a greedy method of producing a sequence of progressively more accurate DeepONets.

The remainder of the paper is organized as follows. Section \ref{sec:problem_setting} presents the ROM framework which motivates NEIM. Section \ref{sec:NEIM} defines the NEIM algorithm, and Section \ref{sec:numerics} presents numerical results. Finally, Section \ref{sec:conclusion} discusses conclusions.

\section{Problem Setting}
\label{sec:problem_setting}
We begin by describing our setting of reduced order models. Let $p, d\in\mathbb{N}$, and consider $\mathcal{P}\subset \R^p$ a compact parameter space and $\Omega(\mu) \subset \R^d$ a bounded spatial domain, where $\mu\in\mathcal{P}$. Then for $\mu\in\mathcal{P}$, consider the nonlinear PDE
\begin{equation}
\label{eq:continuous_pde}
\begin{split}
    \mathcal{A}(\mu)[v(x)] + \mathcal{N}(\mu)[v(x)] &= f(\mu),\quad x\in\Omega(\mu)\\
    \mathcal{B}(\mu)[v(x)] &= 0,\quad\quad x\in\partial\Omega(\mu),
\end{split}
\end{equation}
where $\mathcal{A}(\mu)$ is a linear differential operator, $\mathcal{N}(\mu)$ is a nonlinear differential operator, and $f(\mu)$ is a forcing term. $\mathcal{B}(\mu)$ is a boundary operator defined on the boundary $\partial\Omega(\mu)$ of the spatial domain. We seek a solution $v\in X$, where $X$ is some suitable function space.
To solve this problem numerically, we introduce a finite dimensional subspace $X_h \subseteq X$ in which we seek a solution to a weak formulation of Equation \eqref{eq:continuous_pde}. Using, for example, a finite element discretization of the above problem, we obtain the nonlinear system of equations (with a slight abuse of notation)
\begin{equation}
\label{eq:discrete_problem}
    A(\mu)v + \operatorname{N}(v; \mu) = f(\mu), \quad \mu\in\mathcal{P},
\end{equation}
where $A(\mu)\in\R^{n\times n}$ is the linear operator, $\operatorname{N}(\cdot; \mu):\R^n\to\R^n$ is the nonlinearity, $f(\mu)\in\R^n$ is the forcing term, and $v\in\R^n$.

\subsection{Proper Orthogonal Decomposition}
Now we describe how we perform the proper orthogonal decomposition (POD) to obtain a reduced order modeling corresponding to the discrete problem in Equation \eqref{eq:discrete_problem}. 
Suppose that we have some number $\nu$ of parameters $\mu_1^{POD},\mu_2^{POD},\dots,\mu_\nu^{POD}$. To each parameter $\mu_i^{POD}$, there corresponds a solution $v(\mu_i^{POD})$ of \eqref{eq:discrete_problem}. We collect these solutions into the snapshot matrix $S = \begin{bmatrix}v(\mu_1^{POD}) & \dots & v(\mu_\nu^{POD})\end{bmatrix}$.
The POD then consists in taking the singular value decomposition of $S = U\Sigma V^T$, where $U\in \R^{n\times n}$, $\Sigma \in \R^{n\times \nu}$, and $V\in\R^{\nu\times\nu}$. We take $U_r$ to be the matrix consisting of the first $r$ columns of $U$, and approximate the solution of Equation \eqref{eq:discrete_problem} by solving the ROM problem
\begin{equation}
    \label{eq:reduced_order_problem}
    U_r^\top A(\mu) U_r\tilde v + U_r^\top \operatorname{N}(U_r\tilde v;\mu) = U_r^\top f(\mu)
\end{equation}
so that the solution $v(\mu)$ to Equation \eqref{eq:discrete_problem} and the solution $\tilde v(\mu)$ to Equation \eqref{eq:reduced_order_problem} satisfy $v(\mu) \approx U_r\tilde v(\mu)$.
Assuming that $A(\mu)$ and $f(\mu)$ admit affine decompositions \reviewerCommon{such that}\
\[
    A(\mu) = \sum_{i=1}^{\reviewerA{q_1}} \theta_i^A(\mu) A_i,\quad f(\mu) = \sum_{i=1}^{\reviewerA{q_2}} \theta_i^f(\mu) f_i
\]
for some coefficients $\theta_i^A(\mu)$ and $\theta_i^f(\mu)$, constants $A_i\in\R^{n\times n}$ and $f_i\in\R^{n}$, \reviewerA{and $q_1,q_2\in\mathbb{N}$}, we see that the time complexity of evaluating $U_r^\top A(\mu) U_r$ and $U_r^\top f(\mu)$ is independent of $n$ and only dependent on $r$ since $U_r^\top A_i U_r$ and $U_r^\top f_i$ can be pre-computed. However, the time complexity of computing $U_r^\top \operatorname{N}(U_r\tilde v; \mu)$ still depends on $n$, so we do not see significant improvements in compute time by solving Equation \eqref{eq:reduced_order_problem} instead of Equation \eqref{eq:discrete_problem}. A typical approach to this problem is to use the discrete empirical interpolation method \cite{DEIM}. Here, we instead introduce an iterative method we call the neural empirical interpolation method in which we approximate an affine decomposition of this nonlinear term using a machine learning approach.

\subsection{Discrete Empirical Interpolation Method}
We now briefly recall the discrete empirical interpolation method (DEIM), where one approximates
\[
    \operatorname{N}(U_r\tilde v; \mu) \approx V_{k}(P^\top V_{k})^{-1}P^\top \operatorname{N}(U_r\tilde v; \mu),
\]
where $V_{k}$ is the matrix of the $k$ left singular vectors of the matrix of nonlinear snapshots of the full-order system $S_{\text{N}} = \begin{bmatrix} \operatorname{N}(v(\mu_1^{POD});\mu_1^{POD}) & \dots & \operatorname{N}(v(\mu_\nu^{POD});\mu_\nu^{POD})\end{bmatrix}$
and $P = \begin{bmatrix} e_{p_1} & \dots & e_{p_{k}}\end{bmatrix}$ is a matrix with $e_{p_i} = \begin{bmatrix} 0 & \dots & 0 & 1 & 0 & \dots & 0\end{bmatrix}^\top \in \R^n$ the $p_i$-th column of the identity matrix for $i=1,\dots,k$. The indices $p_1,\dots,p_{k}$ are determined by the greedy Algorithm \eqref{alg:DEIM}. Following the original DEIM paper \cite{DEIM}, the notation $\texttt{max}$ is MATLAB notation so that $[|\rho|, p_\ell] = \texttt{max}\{|r|\}$ implies $|\rho| = |r_{p_\ell}| = \max_{i=1,\dots,n} \{|r_i|\}$ with the smallest index taken in case of a tie.

\begin{algorithm}
\caption{DEIM}
\label{alg:DEIM}
\begin{algorithmic}[1]
\Procedure{DEIM}{$\{\operatorname{N}(v(\mu_i^{POD}); \mu_i^{POD})\}_{i=1}^\nu \subset \R^n$ linearly independent}
    \State{$[|\rho|, p_1] = \texttt{max}\{\left\lvert\operatorname{N}(v(\mu_1^{POD}); \mu_1^{POD})\right\rvert\}$}
    \State{$V_{k} = \begin{bmatrix}\operatorname{N}(v(\mu_1^{POD}); \mu_1^{POD})\end{bmatrix}$, $P = \begin{bmatrix} e_{p_1}\end{bmatrix}$}
    \For{$\ell = 2$ to $\nu$}
        \State{Solve $(P^\top V_{k})\boldsymbol{c} = P^\top \operatorname{N}(v(\mu_\ell^{POD}); \mu_\ell^{POD})$ for $\boldsymbol{c}$}
        \State{$r = \operatorname{N}(v(\mu_\ell^{POD}); \mu_\ell^{POD}) - V_{k}\boldsymbol{c}$}
        \State{$[|\rho|, p_\ell] = \texttt{max}\{|r|\}$}
        \State{$V_{k} \gets \begin{bmatrix} V_{k} & \operatorname{N}(v(\mu_\ell^{POD}); \mu_\ell^{POD})\end{bmatrix}$, $P\gets \begin{bmatrix} P & p_\ell\end{bmatrix}$}
    \EndFor
\EndProcedure
\end{algorithmic}
\end{algorithm}

\section{Neural empirical interpolation method}
\label{sec:NEIM}

Given the setup in the previous section, our goal is to develop a DEIM-inspired neural network based methodology, namely the Neural Empirical Interpolation Method (NEIM), to find an approximation of the nonlinear term of the form
\begin{equation}
    U_r^\top\operatorname{N}(U_r\tilde v;\mu) \approx \widehat{\operatorname{N}}(\tilde v;\mu) \doteq \sum_{i=1}^k \theta_i(\mu)M_{\mu^{(i)}}(\tilde v),
    \label{eq:neim_approx}
\end{equation}
where $M_{\mu^{(i)}}: \mathbb{R}^r \to \mathbb{R}^r$
is the neural network corresponding to $\mu^{(i)}$ for $i=1,\dots,k$, and $\theta_i(\mu)$ is determined through interpolation of some ``optimal'' $\theta_i$. We reinterpret the affine decomposition assumption from the neural network point of view, through a linear combination of parameter dependent coefficients $\theta_i$ and parameter independent networks $M_{\mu^{(i)}}$. Note that $\widehat{\operatorname{N}}(\tilde v;\mu) \in \R^r$, while $\operatorname{N}(U_r\tilde v; \mu) \in \R^n$. Since we only work with the truncated POD basis, in what follows, we drop the subscript $r$ in $U_r$. 

\subsection{NEIM First Step} First, we consider a finite set of parameters $\mathcal{M} = \{\mu_i\}_{i=1}^m \subset\mathcal{P}$ to which correspond the snapshots $\mathcal{S} = \{v_i = v(\mu_i)\}_{i=1}^m\subset\R^n$. We denote their projections onto the POD space by $\tilde v_i = U^\top v_i$, $i=1,\dots,m$. When computing the NEIM approximation in the ROM setting, the quantities $\mathcal{M}$ and $\mathcal{S}$ are already available from the POD approach. Then the first step in NEIM is to initialize our approximation $\widehat{\operatorname{N}}$. We greedily find new terms in the expansion of $\widehat{\operatorname{N}}$, so we denote the iterates with a superscript, the $0$-th iterate being $\widehat{\operatorname{N}}^{(0)} \equiv 0$.

Having initialized our approximation, we perform the first training step of NEIM, i.e., we find the first term in the expansion of $\widehat{\operatorname{N}}$. Because $M_{\mu^{(i)}}(\tilde v)$ in our expansion is not a function of $\mu$, we train $M_{\mu^{(1)}}$ so that $v\mapsto M_{\mu^{(1)}}(U^\top v)$ approximates the projection of the nonlinear term $U^\top N(\cdot; \mu^{(1)})$ for some $\mu^{(1)}$. 
We choose $\mu^{(1)}$ so that a measurement of error is maximized:
\begin{equation*}
    \mu^{(1)} = \argmax_{\mu\in\mathcal{M}} \sum_{i=1}^m w_e(\mu_i;\mu)\mathcal{E}^{(0)}(v_i; \mu)^2,
\end{equation*}
where $\mathcal{E}^{(0)}(v; \mu) = \|U^\top\operatorname{N}(v;\mu) - \widehat{\operatorname{N}}^{(0)}(U^\top v;\mu)\|_2$.
In the above expression, $w_e(\mu_i;\mu)\in [0,\infty)$ are ``error" weights which can be interpreted as quadrature weights. In other words, for a fixed $\mu\in\mathcal{M}$, the sum above is an approximation of the $L^2$ error over $\mathcal{P}$ between $U^\top\operatorname{N}(v(\cdot);\mu)$ and its approximation $\widehat{\operatorname{N}}^{(0)}(U^\top v(\cdot); \mu)$, where $v(\cdot)$ denotes the mapping from a parameter $\mu$ to its corresponding high fidelity solution denoted $v(\mu)$. Thus $\mu^{(1)}$ is the parameter such that the corresponding $L^2$ error approximation is maximized. Note that by choosing the weights appropriately, we can also obtain a more local error estimator, e.g., by choosing $w_e(\mu_i; \mu) = \delta_{\mu_i\mu}$, the Kronecker delta.
Having chosen $\mu^{(1)}$, we train $M_{\mu^{(1)}}$ so that $M_{\mu^{(1)}}(\tilde v_i) \approx U^\top \operatorname{N}(v_i;\mu^{(1)}) / \|U^\top \operatorname{N}(v_i;\mu^{(1)})\|_2$ for $v_i\in\mathcal{S}$. 
The main reason for evaluating the nonlinearity for a fixed parameter $\mu^{(1)}$ while varying the solution $v_i$ \reviewerCommon{comes} from viewing the nonlinear term as an operator; thus the NEIM expansion we obtain approximates the interpolation in $\mu$ of an operator evaluated at particular parameters. To find this first ``mode" of the expansion, we take $M_{\mu^{(1)}}$ to be a simple feedforward neural network trained by minimizing the weighted squared loss
\begin{equation}\label{eq:network_first_mode}
    \sum_{i=1}^m w_t^{(1)}(\mu_i)\left\|M_{\mu^{(1)}}(\tilde v_i) - \frac{U^\top \operatorname{N}(v_i;\mu^{(1)})}{\|U^\top \operatorname{N}(v_i;\mu^{(1)})\|_2}\right\|_2^2
\end{equation}
with respect to the network parameters, where $w_t^{(1)}(\mu_i)\in[0,\infty)$ for $i=1,\dots,m$ determine the relative importance of each sample in the training data for the optimization of the neural network ``training" weights.

At this point we need to find the $\mu$-dependent scaling factor for $M_{\mu^{(1)}}$. Thus, we seek some $\theta_1^{(1)}(\mu)$ so that $\theta_1^{(1)}(\mu)M_{\mu^{(1)}}(\tilde v_i)$ is ``close" to $U^\top\operatorname{N}(v_i;\mu)$ for $\mu\in\mathcal{M}$, and $v_i\in\mathcal{S}$. More precisely, we perform an optimization task to find $\theta_1^{(1)}(\mu)$ minimizing 
\begin{equation}
\label{eq:theta1_optimization}
    \sum_{\mu\in\mathcal{M}}\sum_{i=1}^m w_e(\mu_i;\mu)\| U^\top\operatorname{N}(v_i; \mu) - \theta_1^{(1)}(\mu)M_{\mu^{(1)}}(\tilde v_i)\|_2^2 .
\end{equation}
The new approximation of the projected nonlinearity is $\widehat{\operatorname{N}}^{(1)}(\tilde v;\mu) = \theta_1^{(1)}(\mu)M_{\mu^{(1)}}(\tilde v)$, $\tilde v\in\R^r$. Note that $\theta_1^{(1)}(\mu)$ is only defined for $\mu\in\mathcal{M}$ because the $\theta$ coefficients are interpolated from $\mathcal{M}$ to all of $\mathcal{P}$ at the end of the NEIM approximation procedure.
We can find the value of $\theta_1^{(1)}(\mu)$ by setting the partials of \eqref{eq:theta1_optimization} equal to 0:
\begin{align*}
    &-2\sum_{i=1}^m w_e(\mu_i;\mu)\left\langle M_{\mu^{(1)}}(\tilde v_i), U^\top\operatorname{N}(v_i; \mu) - \theta_1^{(1)}(\mu)M_{\mu^{(1)}}(\tilde v_i)\right\rangle = 0\\
    &\implies \theta_1^{(1)}(\mu) = \frac{\sum_{i=1}^m w_e(\mu_i;\mu)\langle M_{\mu^{(1)}}(\tilde v_i), U^\top\operatorname{N}(v_i; \mu)\rangle}{\sum_{i=1}^m w_e(\mu_i;\mu)\|M_{\mu^{(1)}}(\tilde v_i)\|_2^2},\quad \mu\in\mathcal{M}.
\end{align*}
When all of the weights are equal, having trained $M_{\mu^{(1)}}(\tilde v_i)$ so that $M_{\mu^{(1)}}(\tilde v_i) \approx \frac{U^\top \operatorname{N}(v_i;\mu^{(1)})}{\|U^\top\operatorname{N}(v_i;\mu^{(1)})\|_2}$, we expect that $\theta_1^{(1)}(\mu^{(1)}) \approx \frac1m\sum_{i=1}^m\|U^\top \operatorname{N}(v_i;\mu^{(1)})\|_2$. Indeed, taking the weights $w_e \equiv C$ gives 
\[
    \theta_1^{(1)}(\mu^{(1)}) \approx \frac{\sum_{i=1}^m C \left\langle \frac{U^\top\operatorname{N}(v_i;\mu^{(1)})}{\|U^\top\operatorname{N}(v_i;\mu^{(1)})\|_2}, U^\top\operatorname{N}(v_i;\mu^{(1)})\right\rangle}{\sum_{i=1}^m C\left\|\frac{U^\top\operatorname{N}(v_i;\mu^{(1)})}{\|U^\top\operatorname{N}(v_i;\mu^{(1)})\|_2}\right\|_2^2} = \frac1m \sum_{i=1}^m \|U^\top\operatorname{N}(v_i;\mu^{(1)})\|_2.
\]
Now, the approximation $\widehat{\operatorname{N}}^{(1)}(\tilde v;\mu)$ should be accurate for $\mu$ close to $\mu^{(1)}$ and $\tilde v$ close to $U^\top v$ with $v\in\mathcal{S}$. The first step of NEIM is summarized in Algorithm \eqref{alg:NEIM_initialization}.

\begin{algorithm}
\caption{NEIM First Step}
\label{alg:NEIM_initialization}
\begin{algorithmic}[1]
\Procedure{NEIM\_Init}{$\mathcal{M}$, $\mathcal{S}$}
    \State{$\widehat{\operatorname{N}}^{(0)} \gets 0$}
    \State{$\mu^{(1)} \gets \argmax_{\mu\in\mathcal{M}} \sum_{i=1}^m w_e(\mu_i;\mu)\mathcal{E}^{(0)}(v_i; \mu)^2$}
    \State{Train $M_{\mu^{(1)}}$ so that $M_{\mu^{(1)}}(\tilde v_i) \approx U^\top\operatorname{N}(v_i; \mu^{(1)})/\|U^\top \operatorname{N}(v_i;\mu^{(1)})\|_2$ for all $v_i\in\mathcal{S}$}
    \State{Find $\theta_1^{(1)}(\mu)$ such that $\sum_{\mu\in\mathcal{M}}\sum_{i=1}^m w_e(\mu_i;\mu)\| U^\top\operatorname{N}(v_i; \mu) - \theta_1^{(1)}(\mu)M_{\mu^{(1)}}(\tilde v_i)\|_2^2 = \min!$}
    \State{$\widehat{\operatorname{N}}^{(1)}(\tilde v;\mu) := \theta_1^{(1)}(\mu)M_{\mu^{(1)}}(\tilde v)$, $\tilde v\in\R^r,\mu\in\mathcal{M}$}
\EndProcedure
\end{algorithmic}
\end{algorithm}

\subsection{NEIM Update Step}
To add a term to the NEIM expansion, there are two main modifications of the first step described above. First, we must be careful to orthogonalize our training data with respect to the previously trained neural networks in the NEIM expansion to avoid instability in the estimates of the coefficients $\theta_i(\mu)$. Second, to solve for the coefficients in the NEIM expansion, we must solve a linear system. Indeed, the first step of NEIM can be viewed as solving a $1\times 1$ linear system for the coefficients. In addition to describing these modifications here, we also generalize the other steps of the NEIM update.
At iteration $j$ of the procedure, we have already computed $\widehat{\operatorname{N}}^{(j-1)}$, and  we want to update this approximation by adding a term to the NEIM expansion. We begin, as before, by finding the parameter for which the approximation $\widehat{\operatorname{N}}^{(j-1)}$ has the worst error:
\begin{equation}
\label{eq:error_expression}
\begin{split}
    \mu^{(j)} &= \argmax_{\mu\in\mathcal{M}\setminus\{\mu^{(1)},\dots,\mu^{(j-1)}\}} \sum_{i=1}^m w_e(\mu_i;\mu)\mathcal{E}^{(j-1)}(v_i;\mu)^2,\\
    &\quad\quad\text{where}\quad \mathcal{E}^{(j-1)}(v;\mu)^2 = \| U^\top\operatorname{N}(v; \mu) - \widehat{\operatorname{N}}^{(j-1)}(U^\top v;\mu)\|_2^2.
\end{split}
\end{equation}
By removing previously selected values for $\mu$ from $\mathcal{M}$ in \eqref{eq:error_expression}, we avoid \reviewerCommon{reselecting} the same parameters, due to\reviewerCommon{,} e.g.\reviewerCommon{,} a poorly trained corresponding network, possibly causing stability issues. Moreover, there is no reason why a network with a re-selected parameter would be trained any better than the first network corresponding to that parameter. \reviewerB{For example, while it is expected that parameters selected by NEIM will correspond to the smallest approximation errors, the results of the neural network training depend on the weights $w_t^{(j)}$, and some choices of these weights (e.g., choosing $w_t^{(j)}(\mu^{(j)})=0$) can lead to results with the error at the selected parameter not decreasing much after training compared to the error at other parameters.
Now, having} chosen $\mu^{(j)}$, we want to train $M_{\mu^{(j)}}$ so that 
$$M_{\mu^{(j)}}(\tilde v_i) \approx U^\top \operatorname{N}(v_i;\mu^{(j)}) / \|U^\top \operatorname{N}(v_i;\mu^{(j)})\|_2, \quad \text{for} \quad v_i\in\mathcal{S}.$$
To avoid training instability, $M_{\mu^{(j)}}$ is normalized and orthogonalized with respect to the previously trained neural networks. We define $y^{(j)}(\mu_i) = U^\top\operatorname{N}(v_i; \mu^{(j)})$ for $i=1,\dots,m$, and let $z^{(j)}(\mu_i)$ be the result of performing Gram-Schmidt on $y^{(j)}(\mu_i)$ \reviewerCommon{with respect to} the networks  $M_{\mu^{(1)}}(\tilde v_i), \dots, M_{\mu^{(j-1)}}(\tilde v_i)$. We describe this in Algorithm \eqref{alg:gram_schmidt}. 
\reviewerA{\begin{remark}
Note that the maximum number of terms in the NEIM approximation is $r$ if we assume that the neural networks train well. Indeed, in this case, $\{M_{\mu^{(1)}}(\widetilde{v}_i),\dots,M_{\mu^{(r)}}(\widetilde{v}_i)\}$ would approximate a basis of $\R^r$ for each $i=1,\dots,m$, and any additional networks in the NEIM approximation would not greatly reduce the approximation error. This can be compared with DEIM which typically interpolates at many more points than $r$ (with the number of points bounded by the high fidelity dimension). This difference is due to the fact that DEIM approximates a projection of $\operatorname{N}$ in the high fidelity space which is then multiplied by $U^\top$, but NEIM directly approximates $U^\top\operatorname{N}$ in the reduced order space.
\end{remark}}
\begin{algorithm}
\caption{Output Data Orthogonalization}
\label{alg:gram_schmidt}
\begin{algorithmic}[1]
\Procedure{Gram\_Schmidt}{$y^{(j)}(\mu_i),M_{\mu^{(1)}}(\tilde v_i), \dots, M_{\mu^{(j-1)}}(\tilde v_i)$}
    \State{$z^{(j)}(\mu_i) \gets y^{(j)}(\mu_i)$}
    \For{$k=1,\dots,j-1$}
        \State{$z^{(j)}(\mu_i) \gets z^{(j)}(\mu_i) - \frac{\langle z^{(j)}(\mu_i), M_{\mu^{(k)}}(\tilde v_i)\rangle}{\|M_{\mu^{(k)}}(\tilde v_i)\|_2^2}M_{\mu^{(k)}}(\tilde v_i)$}
    \EndFor
    \State{$z^{(j)}(\mu_i) \gets z^{(j)}(\mu_i)/\|z^{(j)}(\mu_i)\|_2$}
\EndProcedure
\end{algorithmic}
\end{algorithm}

\noindent We train $M_{\mu^{(j)}}$ with the orthogonalized data by minimizing the weighted squared loss
\begin{equation}
\label{eq:network_j_mode}
    \sum_{i=1}^m w_t^{(j)}(\mu_i)\left\|M_{\mu^{(j)}}(\tilde v_i) - z^{(j)}(\mu_i)\right\|_2^2
\end{equation}
\reviewerCommon{with respect to the network parameters,} where the weights $w_t^{(j)}(\mu_i)\in[0,\infty)$ for $i=1,\dots,m$ have the same meaning as in Equation \eqref{eq:network_first_mode}.
After training $M_{\mu^{(j)}}$, we find $\theta_1^{(j)}(\mu),\dots,\theta_j^{(j)}(\mu)$ for $\mu\in\mathcal{M}$ such that 
\begin{equation}
\label{eq:theta_minimization_problem}
    \sum_{\mu\in\mathcal{M}}\sum_{i=1}^m w_e(\mu_i;\mu)\left\|U^\top\operatorname{N}(v_i;\mu) - \sum_{\ell=1}^j \theta_\ell^{(j)}(\mu)M_{\mu^{(\ell)}}(\tilde v_i)\right\|_2^2
\end{equation}
is minimized. This is equivalent to minimizing for each fixed $\mu\in\mathcal{M}$ the quantity
\[
    \sum_{i=1}^m w_e(\mu_i;\mu)\left\|U^\top\operatorname{N}(v_i;\mu) - \sum_{\ell=1}^j \theta_\ell^{(j)}(\mu)M_{\mu^{(\ell)}}(\tilde v_i)\right\|_2^2.
\]
 This problem amounts to solving the $j\times j$ linear system resulting from setting the partial derivatives of this expression equal to zero. Doing so for $\mu\in\mathcal{M}$, we obtain
\[
    -2\sum_{i=1}^m w_e(\mu_i;\mu)\left\langle M_{\mu^{(k)}}(\tilde v_i), U^\top\operatorname{N}(v_i;\mu) - \sum_{\ell=1}^j \theta_\ell^{(j)}(\mu)M_{\mu^{(\ell)}}(\tilde v_i)\right\rangle = 0,\quad k=1,\dots,j,
\]
which results in the system $\textsf{A}\theta = \textsf{b}$ for $\theta = \left[\theta_1^{(j)}(\mu), \dots, \theta_j^{(j)}(\mu)\right]^\top$, with
\begin{equation}
\label{eq:theta_system}
\begin{split}
    \textsf{A}_{k\ell} &= \sum_{i=1}^m w_e(\mu_i;\mu)\left\langle M_{\mu^{(k)}}(\tilde v_i), M_{\mu^{(\ell)}}(\tilde v_i)\right\rangle,\\
    \textsf{b}_k &= \sum_{i=1}^m w_e(\mu_i;\mu)\left\langle M_{\mu^{(k)}}(\tilde v_i), U^\top\operatorname{N}(v_i;\mu)\right\rangle,\quad k,l=1,\dots,j.
\end{split}
\end{equation}
We highlight the fact that the optimization problem defined by the quantity in \eqref{eq:theta_minimization_problem} depends quadratically on $m$ and involves solving $m$ linear systems of size $j\times j$. After solving this system for each $\mu\in\mathcal{M}$, we define the $j$-th approximation as $$\widehat{\operatorname{N}}^{(j)}(\tilde v;\mu) = \sum_{\ell=1}^j \theta_\ell^{(j)}(\mu)M_{\mu^{(\ell)}}(\tilde v)\quad \text{for}\quad \tilde v\in\R^r, \ \mu\in\mathcal{M}.$$
When the algorithm meets the termination condition given by a suitable stopping criteria (e.g.\reviewerCommon{,} the error quadrature \eqref{eq:error_expression} is sufficiently small), we either train a neural network or use interpolation to predict the vector of $\theta_\ell$ for a newly given parameter $\mu$. The method can be chosen depending on the preferred regression or interpolation context.
The full NEIM training procedure is described in Algorithm \eqref{alg:NEIM_full}. We postpone the discussion regarding weights and stopping criteria to the next section.

\begin{algorithm}
\caption{NEIM Training}
\label{alg:NEIM_full}
\begin{algorithmic}[1]
\Procedure{NEIM\_Train}{$\mathcal{M}$, $\mathcal{S}$}
    \State{$\widehat{\operatorname{N}}^{(1)}, \mu^{(1)} \gets \textproc{NEIM\_Init}(\mathcal{M}, \mathcal{S})$}
    \State{$j \gets 2$}
    \While{stopping criteria not met}
        \State{$\mu^{(j)} \gets \arg\max_{\mu\in\mathcal{M}\setminus\{\mu^{(1)},\dots,\mu^{(j-1)}\}} \sum_{i=1}^m w_e(\mu_i;\mu)\mathcal{E}^{(j-1)}(v_i;\mu)^2$}
        \For{$i=1,\dots,m$}
            \State{$y^{(j)}(\mu_i) \gets U^\top\operatorname{N}(v_i; \mu^{(j)})$}
            \State{$z^{(j)}(\mu_i) \gets \textproc{Gram\_Schmidt}(y^{(j)}(\mu_i), M_{\mu^{(1)}}(\tilde v_i), \dots, M_{\mu^{(j-1)}}(\tilde v_i))$}
        \EndFor
        \State{Train $M_{\mu^{(j)}}$ by minimizing $\sum_{i=1}^m w_t^{(j)}(\mu_i)\left\|M_{\mu^{(j)}}(U^\top v(\mu_i)) - z^{(j)}(\mu_i)\right\|^2$}
        \State{Define $\textsf{A}(\mu), \textsf{b}(\mu)$ as in \eqref{eq:theta_system}}
        \State{$\begin{bmatrix}\theta_1^{(j)}(\mu) & \dots & \theta_j^{(j)}(\mu)\end{bmatrix}^\top \gets \textsf{A}(\mu)^{-1}\textsf{b}(\mu)$ for $\mu\in\mathcal{M}$}
        \State{$\widehat{\operatorname{N}}^{(j)}(\tilde v;\mu) := \sum_{\ell=1}^j \theta_\ell^{(j)}(\mu)M_{\mu^{(\ell)}}(\tilde v)$ for $\tilde v\in\R^r,\mu\in\mathcal{M}$}
        \State{$j\gets j+1$}
    \EndWhile
    \State{Perform regression/interpolation for $\theta_\ell$}
\EndProcedure
\end{algorithmic}
\end{algorithm}

\begin{remark}
    In the NEIM algorithm, we exploit the weighting of the least squares problems in two different steps. Indeed, we weight the training data in the neural network loss at each iteration (Equation \eqref{eq:network_j_mode}), and we also weight the least squares problem for finding the $\theta(\mu)$ coefficients (Equation \eqref{eq:theta_minimization_problem}). Of these, the weighting of the training data is most important practically speaking. To obtain good results, it is needed that the trained neural networks $M_{\mu^{(j)}}$ \reviewerCommon{are} accurate when evaluated at \reviewerCommon{$\tilde{v}(\mu^{(j)})$} and $\mu^{(j)}$. Towards this goal, the weight corresponding to \reviewerCommon{$\tilde{v}(\mu^{(j)})$} has to be much  larger relative to the other weights.
\end{remark}
\begin{remark}
    Assuming that the neural networks trained well, due to the pointwise orthogonalization, we would expect the matrix $\textsf{A}(\mu)$ to be close in norm to a diagonal matrix. Moreover, if they are exactly orthogonal pointwise (which is difficult to observe in practice), then $\textsf{A}(\mu)$ is diagonal with
    \[
        \textsf{A}(\mu)_{kk} = \sum_{i=1}^m w_e(\mu_i;\mu)\|M_{\mu^{(k)}}(\tilde v_i)\|_2^2,
    \]
    so that, solving the system $\textsf{A}(\mu)\theta = \textsf{b}(\mu)$ exactly yields
    \[
        \theta_k^{(j)}(\mu) = \frac{\sum_{i=1}^m w_e(\mu_i;\mu)\left\langle M_{\mu^{(k)}}(\tilde v_i), U^\top\operatorname{N}(v_i;\mu)\right\rangle}{\sum_{i=1}^m w_e(\mu_i;\mu)\|M_{\mu^{(k)}}(\tilde v_i)\|_2^2}.
    \]
    Consequently, this gives
    \[
        \widehat{\operatorname{N}}^{(j)}(\tilde v;\mu) = \sum_{k=1}^j \frac{\sum_{i=1}^m w_e(\mu_i;\mu)\left\langle M_{\mu^{(k)}}(\tilde v_i), U^\top\operatorname{N}(v_i;\mu)\right\rangle}{\sum_{i=1}^m w_e(\mu_i;\mu)\|M_{\mu^{(k)}}(\tilde v_i)\|_2^2}M_{\mu^{(k)}}(\tilde v).
    \]
    Interpreting the sums in the coefficient $\theta_k^{(j)}$ as quadrature rules for an $L^2$ inner product, we notice that $\widehat{\operatorname{N}}^{(j)}$ is the sum of approximate projections of $U^\top\operatorname{N}(v;\mu)$ onto the orthogonal neural networks $M_{\mu^{(k)}}$, $k=1,\dots,j$.
\end{remark}

\subsection{Choice of the Weights and Stopping Criteria}
\label{sec:weights_and_stopping}
There are many different ways of defining stopping criteria or the most appropriate weighting rules. One could exploit the knowledge of the system and its behavior, or the rule of thumb given in the first remark, but in general this is problem specific (e.g., dependent on the number of snapshots available, the complexity of the solution as parameters vary, etc.). Let us now describe possible ways to choose the weights for the least square procedures. The error weights $w_e(\mu_i;\mu)$ we choose to find the $\theta(\mu)$ coefficients can be interpreted as quadrature weights.
We can also interpret training weights $w_t^{(j)}(\mu_i)$ as quadrature weights, but because we want the neural network architectures to be small so that their evaluation is fast in the online phase, it is not necessarily best to use ``optimal'' quadrature weights. For example, if the parameters $\mu_i$ are equi-spaced, then when the neural network is too small and all training points are weighted equally, the network has difficulty fitting any of the data at all. Instead, it may be preferable to give nonzero weights to only a small number of parameters $\mu_i$. For example, one could set $w_t^{(j)}(\mu_i) := \reviewerCommon{\delta_{\mu_i\mu^{(j)}}}$ to localize the network training. Another option would be a smooth approximation of the Kronecker delta, for some scalar parameters $C,\zeta>0$
\[
    w_t^{(j)}(\mu_i) := C\operatorname{exp}\left(-\zeta\|\mu_i - \reviewerCommon{\mu^{(j)}}\|_2^2\right).
\]
 In the numerical experiments, we specify the weights in each example.
Let us now focus on the different choices to implement stopping criteria for the NEIM algorithm. Within our iterative methods, as we will show in the next section, a measure of the error is provably non-increasing for each subsequent iteration, thus, we can stop NEIM whenever this error is sufficiently small. 
However, we only know that this error is non-increasing, so it is possible that a desired error tolerance is never reached, so this should be combined with a maximum number of iterations. One could also choose to stop NEIM when this error stops decreasing significantly after each iteration, as in the Elbow Method commonly used in clustering algorithms \cite{elbow}.

\subsection{Error Analysis}
We begin by showing that the measure of error we designed for greedily selecting parameters is non-increasing. We then discuss a local error estimate for NEIM to better identify the different contributions to the error.

\begin{proposition}[Error is non-increasing]
\label{prop:error-nonincreasing}
    The function given by
    \[
        j\mapsto \sum_{i=1}^m w_e(\mu_i;\mu)\mathcal{E}^{(j-1)}(v_i;\mu)^2,\quad j\in\mathbb{N}
    \]
    is non-increasing for each $\mu\in\mathcal{M}$. In particular, if $w_e(\mu_i;\mu) = \delta_{\mu_i\mu}$, we have that
        $j\mapsto\mathcal{E}^{(j-1)}(v_i;\mu_i),\ j\in\mathbb{N}$
    is non-increasing.
\end{proposition}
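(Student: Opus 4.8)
The plan is to identify $\sum_{i=1}^m w_e(\mu_i;\mu)\mathcal{E}^{(j-1)}(v_i;\mu)^2$ with the optimal value of a linear least-squares problem, and then to observe that passing from iteration $j-1$ to iteration $j$ merely enlarges the feasible set of that problem, so its optimal value cannot increase.

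Fix $\mu\in\mathcal{M}$. For $j\ge 1$ introduce the convex quadratic
\[
    g_j(\theta_1,\dots,\theta_j) \doteq \sum_{i=1}^m w_e(\mu_i;\mu)\left\|U^\top\operatorname{N}(v_i;\mu) - \sum_{\ell=1}^j \theta_\ell M_{\mu^{(\ell)}}(\tilde v_i)\right\|_2^2,
\]
and let $g_0$ be the constant $\sum_{i=1}^m w_e(\mu_i;\mu)\|U^\top\operatorname{N}(v_i;\mu)\|_2^2$, consistent with $\widehat{\operatorname{N}}^{(0)}\equiv 0$. First I would note that, by construction, the vector $\bigl(\theta_1^{(j)}(\mu),\dots,\theta_j^{(j)}(\mu)\bigr)$ computed at iteration $j$ solves the normal equations \eqref{eq:theta_system} of the minimization problem \eqref{eq:theta_minimization_problem}; since $g_j$ is convex, those equations characterize its global minimizers, so $\bigl(\theta_1^{(j)}(\mu),\dots,\theta_j^{(j)}(\mu)\bigr)$ attains $\min_{\R^j} g_j$. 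Because $\widehat{\operatorname{N}}^{(j)}(\tilde v_i;\mu)=\sum_{\ell=1}^j\theta_\ell^{(j)}(\mu)M_{\mu^{(\ell)}}(\tilde v_i)$, this reads
\[
    \sum_{i=1}^m w_e(\mu_i;\mu)\mathcal{E}^{(j)}(v_i;\mu)^2 = g_j\bigl(\theta_1^{(j)}(\mu),\dots,\theta_j^{(j)}(\mu)\bigr) = \min_{\R^j} g_j,
\]
and similarly for $j-1$.

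The only remaining point is the nesting of the problems. The networks $M_{\mu^{(1)}},\dots,M_{\mu^{(j-1)}}$ appearing at iteration $j$ are exactly those fixed at iteration $j-1$, so $g_j(\theta_1,\dots,\theta_{j-1},0)=g_{j-1}(\theta_1,\dots,\theta_{j-1})$ identically; hence $\min_{\R^j}g_j \le \min_{\R^{j-1}}g_{j-1}$, which is precisely the asserted inequality $\sum_i w_e(\mu_i;\mu)\mathcal{E}^{(j)}(v_i;\mu)^2 \le \sum_i w_e(\mu_i;\mu)\mathcal{E}^{(j-1)}(v_i;\mu)^2$. For the special case, taking $w_e(\mu_i;\mu)=\delta_{\mu_i\mu}$ and $\mu=\mu_{i_0}\in\mathcal{M}$ collapses the sum to the single term $\mathcal{E}^{(j-1)}(v_{i_0};\mu_{i_0})^2$, so this nonnegative quantity is non-increasing in $j$, and taking square roots gives the claim for $\mathcal{E}^{(j-1)}(v_{i_0};\mu_{i_0})$ itself. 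There is no genuine difficulty in the argument; the one place to be careful is to express the optimality of $\theta^{(j)}(\mu)$ through convexity and the normal equations rather than through $\textsf{A}(\mu)^{-1}\textsf{b}(\mu)$, since $\textsf{A}(\mu)$ need not be invertible when the trained networks happen to be linearly dependent over the snapshot set — the value $\min_{\R^j}g_j$ is nonetheless well defined and attained — and to note that the base case $j=1$ versus $\widehat{\operatorname{N}}^{(0)}\equiv 0$ is covered by the convention for $g_0$.
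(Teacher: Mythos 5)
Your proposal is correct and follows essentially the same argument as the paper: the weighted error at iteration $j$ is the minimum of the least-squares objective over $\R^j$, and comparing with the candidate obtained by appending a zero coefficient to $\bigl(\theta_1^{(j-1)}(\mu),\dots,\theta_{j-1}^{(j-1)}(\mu)\bigr)$ yields the non-increase. Your added care about the normal equations versus invertibility of $\textsf{A}(\mu)$ and the $j=1$ base case is a harmless refinement of the same idea.
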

\begin{proof}
    At each iteration of NEIM, for every $\mu\in\mathcal{M}$, $\theta_1^{(j)}(\mu),\dots,\theta_j^{(j)}(\mu)$ minimize
    \[\sum_{i=1}^m w_e(\mu_i;\mu)\mathcal{E}^{(j)}(v_i;\mu)^2 = 
        \sum_{i=1}^m w_e(\mu_i;\mu)\left\|U^\top\operatorname{N}(v_i;\mu) - \sum_{\ell=1}^j \theta_\ell^{(j)}(\mu)M_{\mu^{(\ell)}}(\tilde v_i)\right\|_2^2.
    \]
    Thus, we have that
    \begin{align*}
        \sum_{i=1}^m w_e(\mu_i;\mu)&\mathcal{E}^{(j)}(v_i;\mu)^2 
        % = \sum_{i=1}^m w_e(\mu_i;\mu)\left\|U^\top\operatorname{N}(v_i;\mu) - \sum_{\ell=1}^j \theta_\ell^{(j)}(\mu)M_{\mu^{(\ell)}}(\tilde v_i)\right\|_2^2\\ 
        % &
        = \argmin_{\theta\in\R^j} \sum_{i=1}^m w_e(\mu_i; \mu)\left\|U^\top \operatorname{N}(v_i;\mu) - \sum_{\ell=1}^j \theta_\ell M_{\mu^{(\ell)}}(\tilde v_i)\right\|_2^2\\
        &\le \sum_{i=1}^m w_e(\mu_i;\mu)\left\|U^\top\operatorname{N}(v_i;\mu) - \sum_{\ell=1}^{j-1} \theta_\ell^{(j-1)}(\mu)M_{\mu^{(\ell)}}(\tilde v_i) - 0\cdot M_{\mu^{(j)}}(\tilde v_i)\right\|_2^2\\ 
        &= \sum_{i=1}^m w_e(\mu_i;\mu)\mathcal{E}^{(j-1)}(v_i;\mu)^2,
    \end{align*}
    where in the inequality we choose $\theta = (\theta_1^{(j-1)}, \dots, \theta_{j-1}^{(j-1)}, 0) \in \R^j$. 
\end{proof}
\begin{remark}
    Examining the proof of the previous proposition carefully, we see that it can be generalized for more general error measures of the form 
    \[
        h_\mu^{(j)}(\theta_1^{(j)},\dots,\theta_j^{(j)}) := g\left(\sum_{\ell=1}^j \theta_\ell^{(j)}M_{\mu^{(\ell)}}(\tilde v_1),\dots,\sum_{\ell=1}^j \theta_\ell^{(j)}M_{\mu^{(\ell)}}(\tilde v_m), \mu\right),
    \]
    where $h_\mu^{(j)}:\R^j\to\R$ has a minimum. Then, the expression in Equation \eqref{eq:error_expression} becomes
    \[
        \mu^{(j)} = \argmax_{\mu\in\mathcal{M}\setminus\{\mu^{(1)},\dots,\mu^{(j-1)}\}} h_\mu^{(j-1)}(\theta_1^{\reviewerCommon{(j-1)}}(\mu),\dots,\theta_{j-1}^{(j-1)}(\mu)),
    \]
    and the expression in equation \eqref{eq:theta_minimization_problem} is replaced by
    \[
        \sum_{\mu\in\mathcal{M}} h_\mu^{(j)}(\theta_1^{(j)}(\mu),\dots,\theta_j^{(j)}(\mu)),
    \]
    where the $\theta_\ell^{(j)}(\mu)$ are to-be-determined by the subsequent steps of NEIM.
\end{remark}
\begin{remark}
    The previous proposition still holds if we hold previously computed $\theta$'s constant in the NEIM algorithm. Doing so would not require solving a linear system at each iteration of the NEIM algorithm, but since this linear system is small, we can cheaply recompute it during each iteration.
\end{remark}

\subsubsection{Local Error Estimate}
We now seek a theoretical estimate to explain the sources of error in NEIM. For simplicity, we will consider the case in which the NEIM networks approximate constant vectors, that is, $w_t^{(j)}(\mu_i) > 0$ if and only if $\mu_i = \mu^{(j)}$, and that $w_e(\mu_i; \mu_j) = \delta_{ij}$.
We define the matrix of exact vectors as 
$M^{(k)} = \begin{bmatrix} z^{(1)}(\mu^{(1)}) & \dots & z^{(k)}(\mu^{(k)})\end{bmatrix}$, and the matrix of its approximation by neural networks as $\widehat{M}^{(k)}(\mu) = \begin{bmatrix}M_{\mu^{(1)}}(\tilde v(\mu)) & \dots & M_{\mu^{(k)}}(\tilde v(\mu))\end{bmatrix}$.
% \[
%     M^{(k)} = \begin{bmatrix}
%     \vline & & \vline\\
%        z^{(1)}(\mu^{(1)}) & \dots & z^{(k)}(\mu^{(k)})\\
%     \vline & & \vline
%     \end{bmatrix}, \quad \text{and} \quad
%     \widehat{M}^{(k)}(\mu) = \begin{bmatrix}
%         \vline & & \vline\\
%         M_{\mu^{(1)}}(\tilde v(\mu)) & \dots & M_{\mu^{(k)}}(\tilde v(\mu))\\
%         \vline & & \vline
%     \end{bmatrix}.
% \]
Now define the vector of interpolated $\theta$ coefficients resulting from NEIM to be
\[
    \widehat\theta^{(k)}(\mu) = (\widehat\theta_1^{(k)}(\mu),\dots,\widehat\theta_k^{(k)}(\mu)),
\]
and the exact $\theta$ coefficients to be
\[
    \theta^{(k)}(\mu) = \argmin_{\theta\in\R^{k}} \|\operatorname{N}(\mu) - M^{(k)}\theta\|_2,\quad \mu\in\mathcal{P}.
\]
With a little abuse of notation, we denote $\operatorname{N}(\mu) := U^\top\operatorname{N}(v(\mu); \mu)$, and we want to estimate
\[
    \mathcal{E}_{\text{NEIM}}^{(k)} := \|\operatorname{N} - \widehat{M}^{(k)}\widehat{\theta}^{(k)}\|_{L^2(\mathcal{P})}.
\]
Repeatedly applying the triangle inequality and Cauchy-Schwarz inequality, we have
\begin{align*}
    \mathcal{E}_{\text{NEIM}}^{(k)}
        &\le \|\operatorname{N} - M^{(k)}\theta^{(k)}\|_{L^2} + \|M^{(k)}\theta^{(k)} - \widehat{M}^{(k)}\theta^{(k)}\|_{L^2} + \|\widehat{M}^{(k)}\theta^{(k)} - \widehat{M}^{(k)}\widehat{\theta}^{(k)}\|_{L^2}\\
        &\le \|\operatorname{N} - M^{(k)}\theta^{(k)}\|_{L^2} + \sum_{i=1}^k \|z^{(i)}(\mu^{(i)}) - M_{\mu^{(i)}}(\tilde v(\cdot))\|_{L^2}\|\theta_i^{(k)}\|_{\reviewerCommon{L^\infty}} \\
        &\hspace{1ex} + \sum_{i=1}^k \|M_{\mu^{(i)}}(\tilde v(\cdot))\|_{L^2}\|\theta_i^{(k)} - \widehat{\theta}_i^{(k)}\|_{\reviewerCommon{L^\infty}}\\
        &\le \underbrace{\|\operatorname{N} - M^{(k)}\theta^{(k)}\|_{L^2}}_{\text{Projection error}} + \sum_{i=1}^k \underbrace{\|z^{(i)}(\mu^{(i)}) - M_{\mu^{(i)}}(\tilde v(\cdot))\|_{L^2}}_{\text{Training error}}\|\theta_i^{(k)}\|_{\reviewerCommon{L^\infty}} \\
        &\hspace{1ex} + \lvert\mathcal{P}\rvert^{1/2}\sum_{i=1}^k \underbrace{\|\theta_i^{(k)} - \widehat{\theta}_i^{(k)}\|_{\reviewerCommon{L^\infty}}}_{\text{Interpolation error}} + \sum_{i=1}^k \underbrace{\|M_{\mu^{(i)}}(\tilde v(\cdot)) - z^{(i)}(\mu^{(i)})\|_{L^2}\|\theta_i^{(k)} - \widehat{\theta}_i^{(k)}\|_{\reviewerCommon{L^\infty}}}_{\text{Training/interpolation error interaction}},
\end{align*}
where we used the triangle inequality and the fact that $\|z^{(i)}(\mu^{(i)})\|_2 = 1$.

Note that the projection error above is decreasing to $0$ in $k$ if the nonlinearities used for training NEIM are linearly independent. The training error depends on the neural network architectures used, the density of $\mathcal{M}$ in $\mathcal{P}$, and the network training procedure. Finally, the interpolation error also depends on the density of $\mathcal{M}$ in $\mathcal{P}$.

\subsection{Comparison with State-of-the-Art Techniques}
Here, we qualitatively compare NEIM and the discrete empirical interpolation method (DEIM) \cite{DEIM} to highlight a few advantages of NEIM. We then briefly relate NEIM to DeepONets \cite{Lu2021}. 

In NEIM, we build an approximation of a nonlinearity by greedily choosing parameters corresponding to large error. This is in contrast to DEIM in which one computes a selection operator for the rows of the nonlinearity, and DEIM interpolates exactly at these selected rows. Thus, NEIM allows adaptivity in parameter space, unlike DEIM which is more aptly described as spatial interpolation. NEIM's adaptivity in parameter space would be useful if a given region of parameter space requires more accurate approximation than another region of parameter space, for example.

Now for both NEIM and DEIM, if the nonlinearity acts componentwise on its input, then the complexity of evaluating the nonlinearity depends only on the reduced dimension $r$. However, DEIM is not as efficient if the nonlinearity does not act componentwise on its input. NEIM, on the other hand, allows efficient computation, even for nonlinearities which do not act componentwise. This is due to the fact that DEIM performs exact evaluation of the nonlinearity at selected rows, unlike NEIM which does not rely on the explicit form of the nonlinearity, instead approximating simultaneously both the nonlinearity and a nonlinear projection of the nonlinearity. 

Because NEIM does not rely on the explicit form of the nonlinearity and instead only data, it is also possible to use NEIM in a strictly data-driven context without explicitly evaluating the nonlinearity. This is not possible with DEIM.

It should be noted, however, that in cases where both NEIM and DEIM can be used to efficiently reduce the time complexity of evaluating nonlinearities, DEIM will typically exhibit faster decay of error, though there are subtleties in this regard (see the numerical results for further discussion of this point). 
This comparison is summarized in Table \ref{table:NEIM-DEIM-comparison}.

\begin{table}[h]
\centering
\caption{Properties of the Neural Empirical Interpolation Method (NEIM) and the Discrete Empirical Interpolation Method (DEIM).}
\resizebox{\textwidth}{!}{\begin{tabular}{|c|c|}
    \hline
     \cellcolor[HTML]{E5E3E3}\textbf{NEIM} & \cellcolor[HTML]{E5E3E3}\textbf{DEIM} \\
     \hline
     Greedy in parameter space & Greedy in spatial domain\\
     No reliance on explicit form of nonlinearity & Exact evaluation of nonlinearity at selected rows\\
     Nonlinear projection & Linear projection\\
     Efficient for nonlocal and local nonlinearities & Efficient for only local nonlinearities\\
     \hline
\end{tabular}}
\label{table:NEIM-DEIM-comparison}
\end{table}

We now briefly relate NEIM to DeepONets. DeepONets are a specific neural network architecture designed to approximate nonlinear operators \cite{Lu2021}. NEIM is related to DeepONets in that NEIM is essentially just a vector-valued DeepONet structured and trained in a greedy fashion when the $\theta$ coefficients are predicted by a neural network. Indeed, the $\theta$ coefficients in NEIM correspond to the trunk net of a DeepONet, while the vector-valued neural networks in the NEIM expansion correspond to the branch nets of a stacked DeepONet. A sketch of the architecture is given in Figure \ref{fig:deeponet-diagram}. This suggests that analysis of DeepONets may be modified to analyze NEIM as well.

\begin{figure}[bht]
    \centering
    \includegraphics[width=0.71\textwidth]{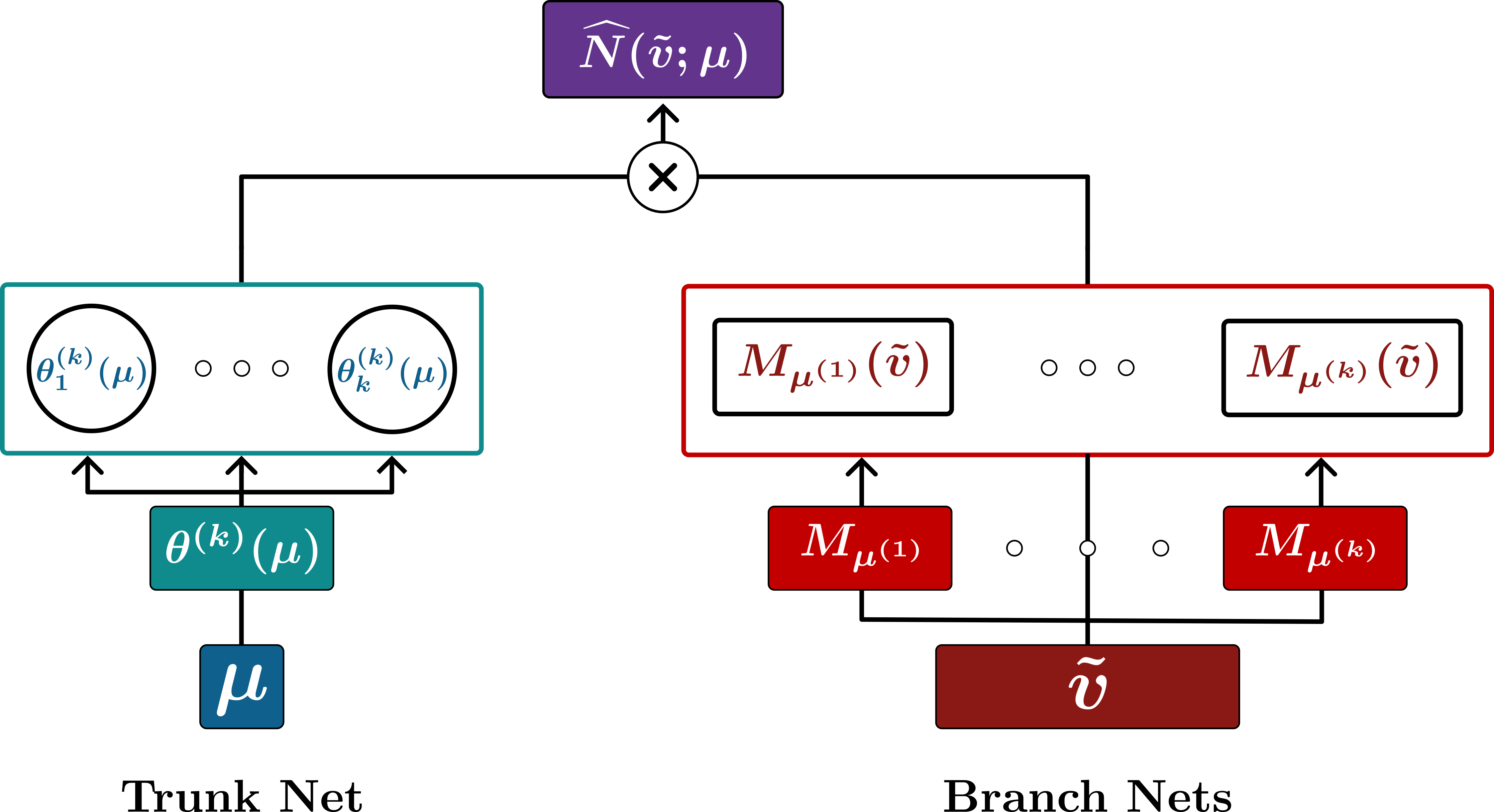}
    \caption{DeepONet schematic of NEIM.}
    \label{fig:deeponet-diagram}
\end{figure}

\section{Numerical examples}
\label{sec:numerics}
We now present some numerical examples to demonstrate the performance of NEIM\footnote[1]{Code for all examples is available at \url{https://github.com/maxhirsch/NEIM}.}. We use the open source library MLniCS \cite{mlnics}, based on RBniCS \cite{RozzaBallarinScandurraPichi2024} and FEniCS \cite{LoggAutomatedSolutionDifferential2012}, for our physics-informed neural network example. For each experiment, we use PyTorch's learning rate scheduler and the Adam optimizer for training the NEIM networks. The other parameters for the training and NEIM network architecture are listed in Table \ref{tab:architecture}.

\begin{table}[H]
    \centering
    \begin{tabular}{|c|c|c|c|c|}
    \hline
       Example & Hidden Layers & Neurons in Hidden Layer & Epochs\\
    \hline\hline
        Solution Independent & 1 & 1 & 20000 \\
        \hline
        Solution Dependent & 1 & 50 & 10000 \\
        \hline
        PINN & 1 & 10 & 10000 \\
        \hline
        Liquid Crystal $Q$, $r$ & 1 & 30 & 10000 \\
        \hline
        Liquid Crystal $\theta$ & 2 & 30 & 30000 \\
        \hline
    \end{tabular}
    \caption{Architectures exploited for the numerical examples showing the number of hidden layers for each network in the NEIM expansion, the number of neurons in each hidden layer, and the number of epochs for each network in the expansion.}
    \label{tab:architecture}
\end{table}
For the liquid crystal experiment, our two NEIM approximations used the same training parameters, which we list in the ``Liquid Crystal $Q$, $r$'' row. The ``Liquid Crystal $\theta$'' row in the table lists the parameters for training the neural network which outputs the prediction of $\theta(\mu)$. \reviewerA{We tested other interpolation methods for $\theta(\mu)$ in the other experiments depending on the complexity of the benchmark. While radial basis function interpolation could be used, it did not offer much improvement in our tests, so we used piecewise cubic and piecewise} linear interpolation for the prediction of $\theta(\mu)$ \reviewerA{instead}.
In what follows, the ``number of modes'' used in DEIM will refer to the number of components of the nonlinearity which DEIM selects to interpolate. 
In NEIM, the ``number of modes'' will simply refer to the number of terms in the NEIM expansion.

\subsection{Solution Independent Function Approximation}
\label{subsection:solution_independent_function_approximation}
In this first example, we approximate a parameterized function that can be thought of as the forcing term in a parameterized PDE discretization. Let us consider $s:\Omega\times\mathcal{P}\to\mathbb{R}$ defined by $s(x;\mu) = (1-x)\cos(3\pi\mu(x+1))e^{-(1+x)\mu},$ where $\Omega = [-1, 1]$ and $\mu\in\mathcal{P} = [1,\pi]$, \reviewerA{and the Poisson problem defined by
\begin{equation}
\label{eq:poisson}
    \begin{cases}
        \Delta v(x) = s(x; \mu),&x\in\Omega,\\
        v(-1) = v(1) = 0.
    \end{cases}
\end{equation}}
Now let $\boldsymbol{x} = [x_1,\dots,x_n]^\top\in\mathbb{R}^n$ with $x_i$ equally spaced points in $\Omega$ for $i=1,\dots,n$, with $n=100$, and  define the forcing $f:\mathcal{P}\to\mathbb{R}^n$ by $f(\mu) = [s(x_1;\mu),\dots,s(x_n;\mu)]^\top\in\mathbb{R}^n$ for $\mu\in\mathcal{P}$. The goal is to solve the equation 
\[
    h^{-2}Av_{2:n-1} = f(\mu)_{2:n-1},
\]
coming from a finite difference approximation of \reviewerA{\eqref{eq:poisson},} where $v_1 = v_n = 0$, $h^{-2} = 30$, and
\[
    A = \begin{bmatrix}
        2 & -1 &&&\\
        -1 & 2 & -1 & &\\
        & -1 & 2 & \ddots & \\
        & & \ddots & \ddots & -1\\
        & & & -1 & 2
    \end{bmatrix} \in \R^{(n-2)\times (n-2)}.
\]
Here, the subscript in $v_{2:n-1}$ and $f(\mu)_{2:n-1}$ denote the vectors in $\R^{n-2}$ containing the components $2,3,\dots,n-1$ of $v$ of $f(\mu)$, respectively. 

Solving this equation we obtain the snapshots $v(\mu_j)$ for $\{\mu_j\}_{j=1}^m \in\mathcal{P}$, with $m=51$ snapshots, and perform a POD to obtain the matrix $U_r$ of the first $r$ left singular vectors.  The parameters $\{\mu_j\}_{j=1}^m$ are selected as equally spaced points in $\mathcal{P} = [1,\pi]$. Here, we aim at comparing the performance of NEIM and DEIM approaches to approximate the nonlinearity $U_r^\top N(U_r\tilde v; \mu) \equiv U_r^\top f(\mu)$.

By looking at the POD singular values to obtain the matrix $U_r$ in Figure \ref{fig:singular-values-experiment-1}, we observe that for $r\geq30$ they are numerically $0$. Since these are not meaningful for the expansion, we choose $r = 30$.
\begin{figure}[ht]
  \captionsetup{width=0.45\textwidth}
\begin{minipage}[c]{0.49\textwidth}
    \centering
    \includegraphics[width=\textwidth]{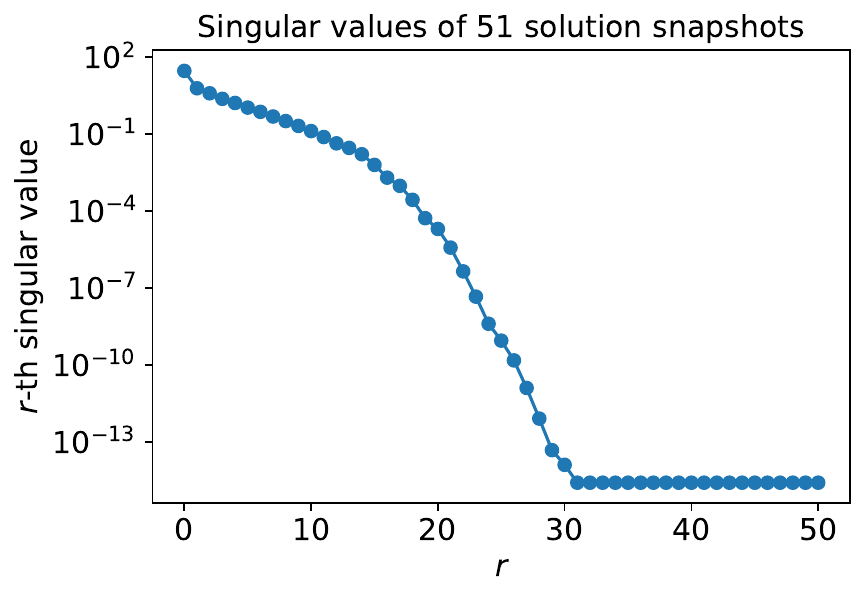}
    \caption{Singular values of snapshot matrix in experiment \ref{subsection:solution_independent_function_approximation}.}
    \label{fig:singular-values-experiment-1}
\end{minipage}\hfill
\begin{minipage}[c]{0.49\textwidth}
  \centering
  \includegraphics[width=\textwidth]{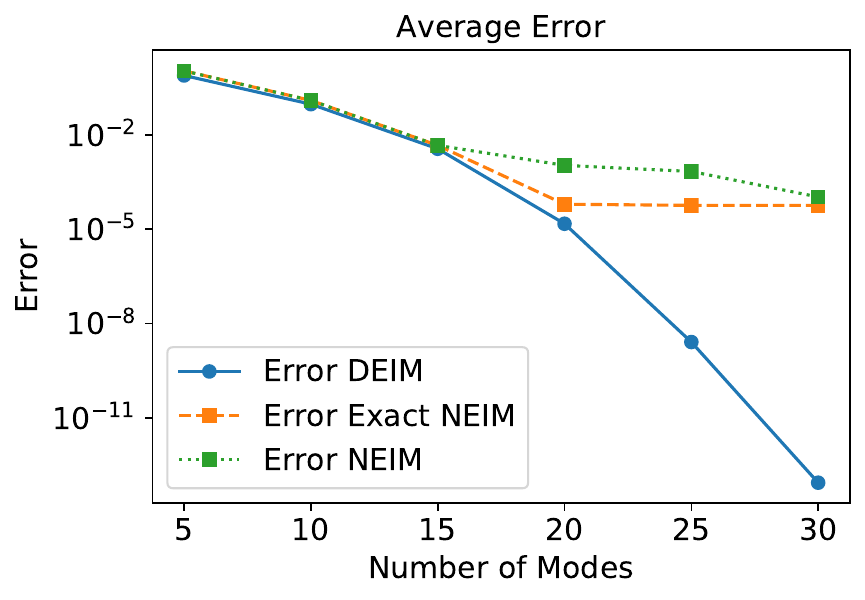}
  \caption{NEIM and DEIM average absolute error by number of modes in \ref{subsection:solution_independent_function_approximation}.}
  \label{fig:average-error-experiment-1}
\end{minipage}
\end{figure}
We now compute \reviewerCommon{a} NEIM approximation, where we take all weights equal to $1$, and \reviewerCommon{we} compare \reviewerCommon{it} with \reviewerCommon{a} DEIM approach. Because the NEIM neural networks in this case approximate a \reviewerA{constant vector-valued function} at each iteration \reviewerA{as $f(\mu)$ is independent of $v$}, we also include a comparison to ``exact'' NEIM, that is, the result of NEIM when the neural networks are replaced with the exact constant vectors \reviewerA{$M_{\mu^{(i)}}(\widetilde{v})\equiv c_i$ for some $c_i\in\R^r$}. This way, we can analyze the performance of NEIM in relation to the accuracy of the different components in the expansion.

In Figure \ref{fig:increasing-modes-experiment-1}, we see how the error in NEIM and DEIM develop as the number of modes used increases. The number of modes in DEIM is the number of singular vectors retained in the DEIM POD of the nonlinearity, while the number of modes in NEIM is the number of neural networks in the NEIM expansion. The error for all methods decreases with respect to the number of modes used. For larger numbers of modes, reaching machine precision for the singular values, DEIM outperforms NEIM, but when considering only fewer modes, NEIM  performance are remarkably great over the whole parameter space. Indeed, for $r < 20$ the neural empirical interpolation method shows small error at the parameters that have been selected, outperforming DEIM for many parameter values. \reviewerA{We remark that NEIM results in small error at the selected parameters $\mu^{(j)}$ primarily because for those parameters we obtain a best fit to $v\mapsto U^\top \operatorname{N}(v; \mu^{(j)})$, while the nonlinearities $v\mapsto U^\top \operatorname{N}(v; \mu_i)$ for $\mu_i\ne\mu^{(j)}$ may not be close to the span of the trained networks in the NEIM expansion.} DEIM seems to have roughly the same error across the parameter space for each number of modes used, while NEIM shows error of different order of magnitude. As expected, the error for the exact version of NEIM, exploiting the exact constant vectors as modes, is smaller than that of the normal NEIM when using a large number of modes. Despite this, we remark that the accuracy of NEIM and NEIM ``exact" are equivalent for $r < 20$, confirming the consistency of the approach.
\begin{figure}[ht]
\begin{tabular}{cc}
  \includegraphics[width=.45\textwidth]{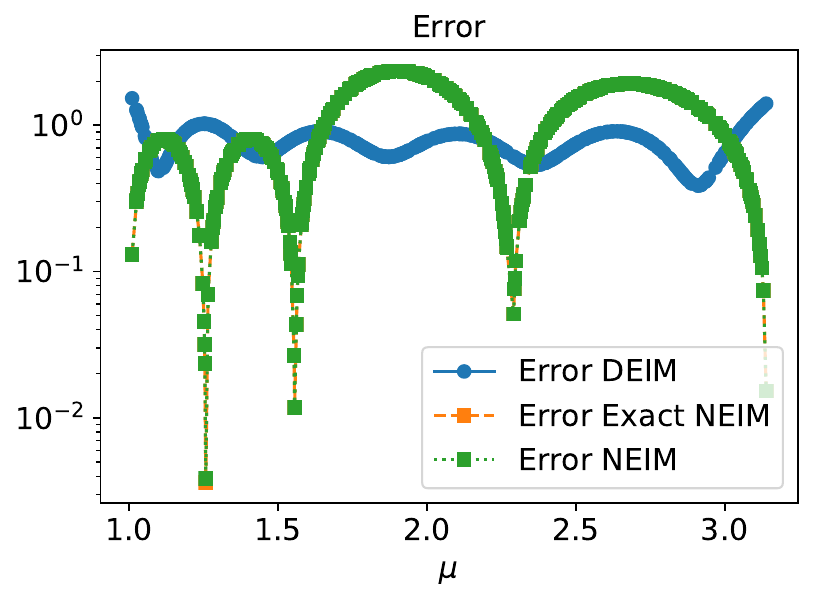} &   \includegraphics[width=.45\textwidth]{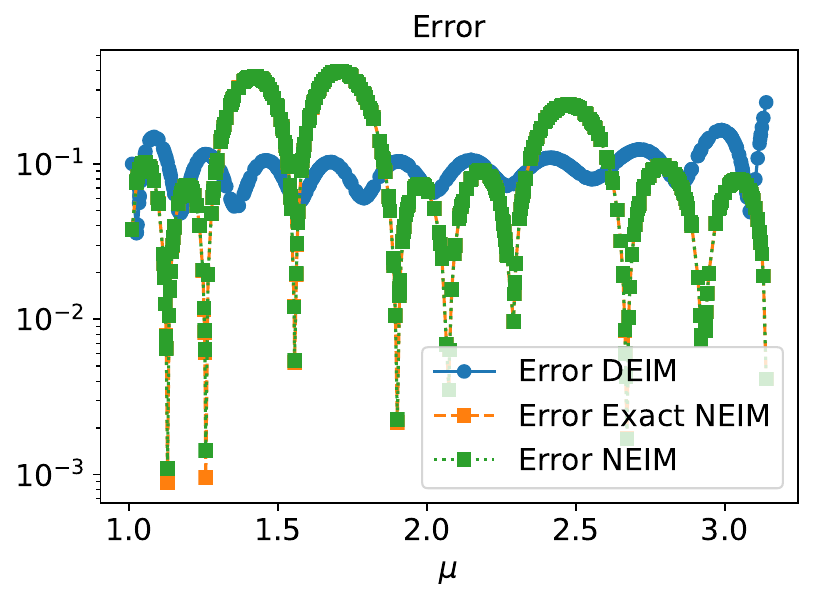} \\
(a) 5 modes & (b) 10 modes \\[6pt]
 \includegraphics[width=.45\textwidth]{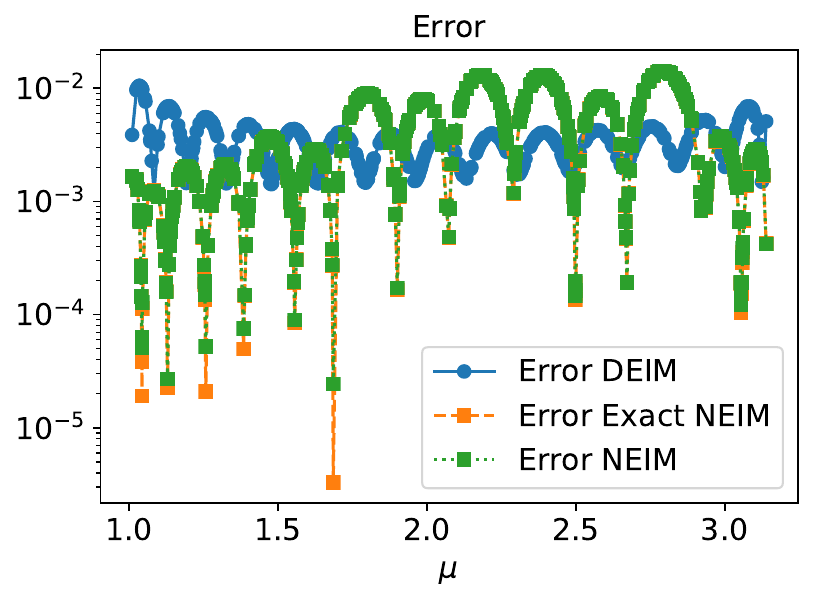} &   \includegraphics[width=.45\textwidth]{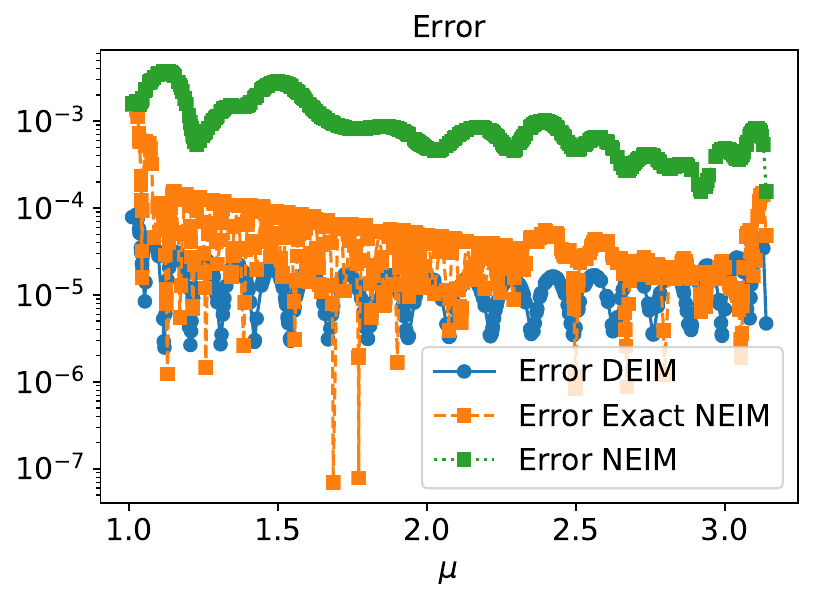} \\
(c) 15 modes & (d) 20 modes \\[6pt]
\includegraphics[width=.45\textwidth]{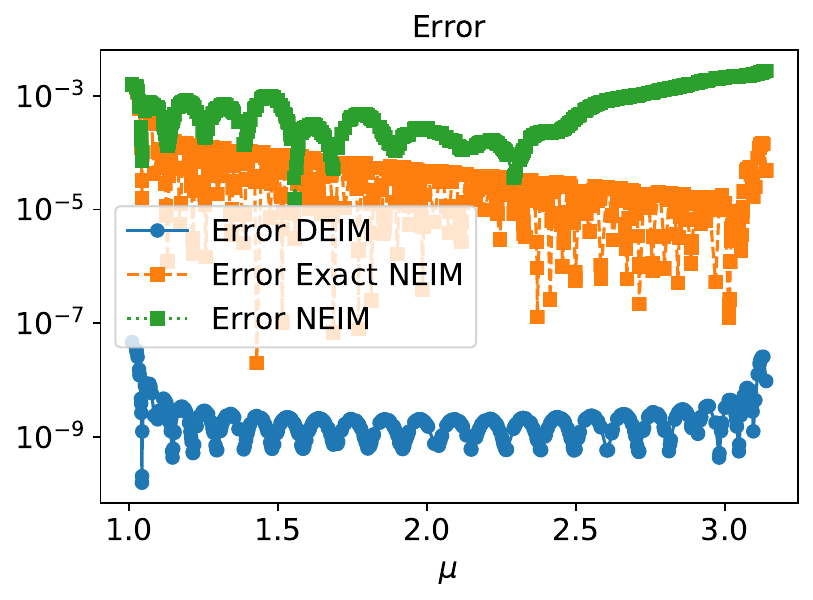} & \includegraphics[width=.45\textwidth]{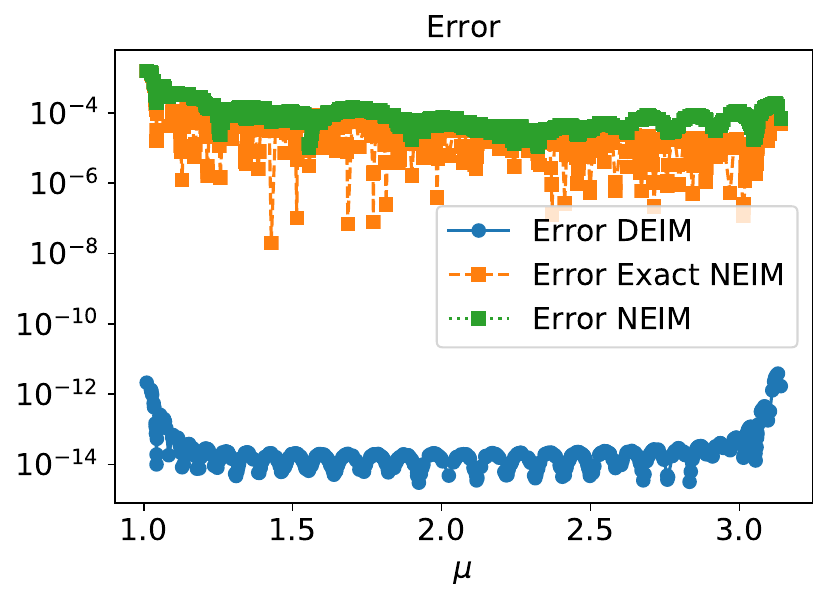}\\
(e) 25 modes & (f) 30 modes
\end{tabular}
\caption{NEIM and DEIM absolute error increasing number of modes in \ref{subsection:solution_independent_function_approximation}.}
\label{fig:increasing-modes-experiment-1}
\end{figure}
In Figure \ref{fig:average-error-experiment-1}, we see how the nonlinearity absolute error averaged over parameters changes with respect to the number of modes used in the NEIM and DEIM algorithms. This error is computed as
\[
    \frac{1}{m_{\text{test}}}\sum_{j=1}^{m_{\text{test}}} \|\widehat{f}(\mu) -U_r^\top f(\mu)\|_2,
\]
where $m_{\text{test}}=500$ is the number of parameters for which we compute the error, and $\widehat{f}$ is the DEIM or NEIM approximation of $U_r^\top f(\mu)$. We see that the error for NEIM and DEIM decay similarly for small numbers of modes, but the error for NEIM eventually show a plateau, while the error for DEIM continues to decrease.
% \begin{figure}[ht]
%     \centering
%     \includegraphics[width=.5\textwidth]{Experiment1/error_by_modes_experiment_1.pdf}
%     \caption{NEIM and DEIM average absolute error by number of modes in \ref{subsection:solution_independent_function_approximation}.}
%     \label{fig:average-error-experiment-1}
% \end{figure}

\subsection{Solution Dependent Function Approximation}
\label{subsection:solution_dependent_function_approximation}
We now approximate the parameterized nonlinear function $s:\Omega\times\R\times\mathcal{P}\to\R$ defined by $
    s(x,v;\mu) = (1 - \lvert x\rvert)e^{-(1+x)v\mu}$, where $\Omega = [-1,1]$, $\mu\in\mathcal{P}=[1,\pi]$, \reviewerA{appearing as a nonlinear term in
    \begin{equation}
        \begin{cases}
            \Delta v(x) = s(x,v(x);\mu),\quad x\in\Omega,\\
            v(-1) = v(1) = 0.
        \end{cases}
    \end{equation}
}
\reviewerCommon{Now let $x_i$ be} equally spaced points in $\Omega$ for $i=1,\dots,n$, with $n=100$\reviewerCommon{, and in an abuse of notation, let} $v(\mu)\in\R^n$ be the solution of the \reviewerCommon{discretized} system,
\begin{equation}
    \begin{bmatrix}\vspace{-0.1cm}
        0 & \dots & 0\\
        \vdots & h^{-2}A & \vdots\\
        0 & \dots & 0
    \end{bmatrix}v - f(v; \mu) = 0, \qquad \text{with} \qquad v_1 = v_n = 0,
\end{equation}
where $h^{-2} = 30$ and $f(v;\mu)$ is the nonlinear term in the system defined by $f(v; \mu) = \begin{bmatrix} s(x_1, v_1; \mu) & \dots & s(x_n, v_n; \mu)\end{bmatrix}^\top$.
As before, we use $m=51$ equally spaced parameters in $\mathcal P = [1,\pi]$, and perform a POD on the matrix of solution snapshots to obtain a POD basis $U_r$. The goal now is to approximate $U_r^\top f(U_r\tilde v; \mu)$ for $\mu\in\mathcal{P}$.
Based on the plot of singular values in Figure \ref{fig:singular-values-experiment-2}, we truncate our expansion at $r=20$ modes.
\begin{figure}[ht]
  \captionsetup{width=0.45\textwidth}
  \begin{minipage}[c]{0.49\textwidth}
    \centering
    \includegraphics[width=\textwidth]{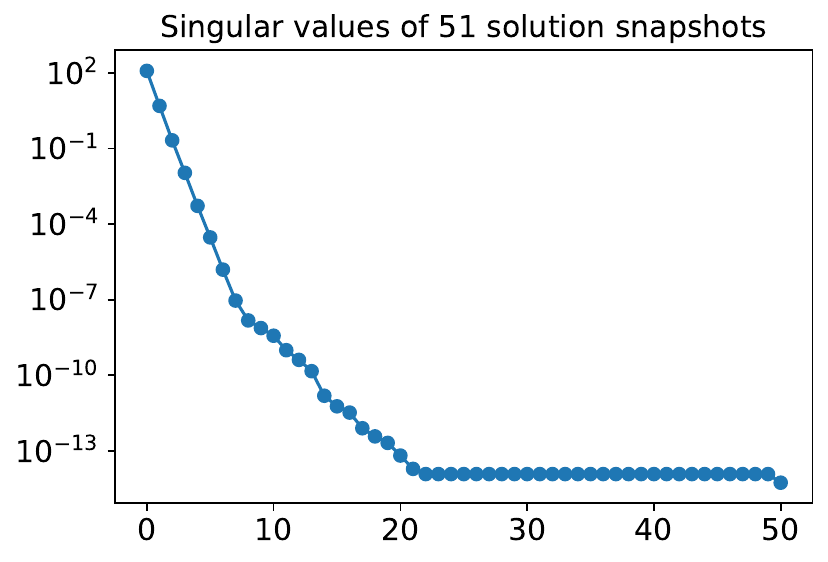}
    \caption{Singular values of snapshot matrix for experiment \ref{subsection:solution_dependent_function_approximation}.}
    \label{fig:singular-values-experiment-2}
\end{minipage}\hfill
\begin{minipage}[c]{0.49\textwidth}
  \centering
  \includegraphics[width=\textwidth]{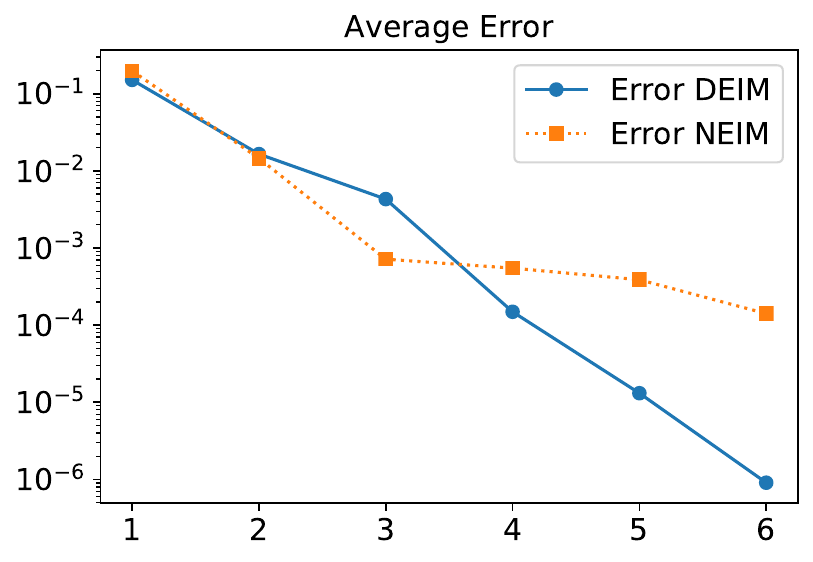}
  \caption{NEIM and DEIM average absolute error by number of modes in \ref{subsection:solution_dependent_function_approximation}.}
  \label{fig:average-error-experiment-2}
\end{minipage}\hfill
\end{figure}
For this test case, we take the weights in the NEIM algorithm to be
    $w_e(\mu_i; \mu_j) = \delta_{ij},\quad w_t^{(k)}(\mu_i) = 1\quad \forall i,j,k$,
and compare the accuracy results of NEIM and DEIM approaches.

In Figure \ref{fig:increasing-modes-experiment-2}, we see how the error in NEIM and DEIM develop as the number of modes used increases. We see how the greedy procedure led NEIM to initially select parameters close to the boundary of parameter space, and the errors for NEIM and DEIM both decrease as the number of modes increases, as expected. Also in this case, the error for NEIM is lowest for selected parameters, while both approaches attempt to control the error across the parameter space. While the DEIM error continues to decrease, eventually the NEIM error ceases to decrease significantly, but not before having reached an accuracy of order $10^{-4}$. Being a completely data-driven procedure such approximation property is remarkable, especially considering we are not exploiting any physics.
\begin{figure}[ht]
\begin{tabular}{cc}
  \includegraphics[width=0.45\textwidth]{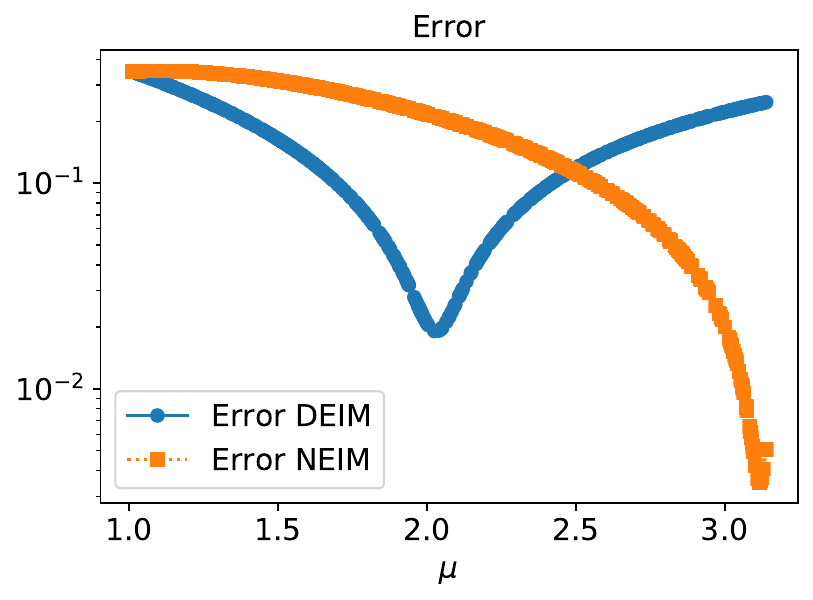} &   \includegraphics[width=0.45\textwidth]{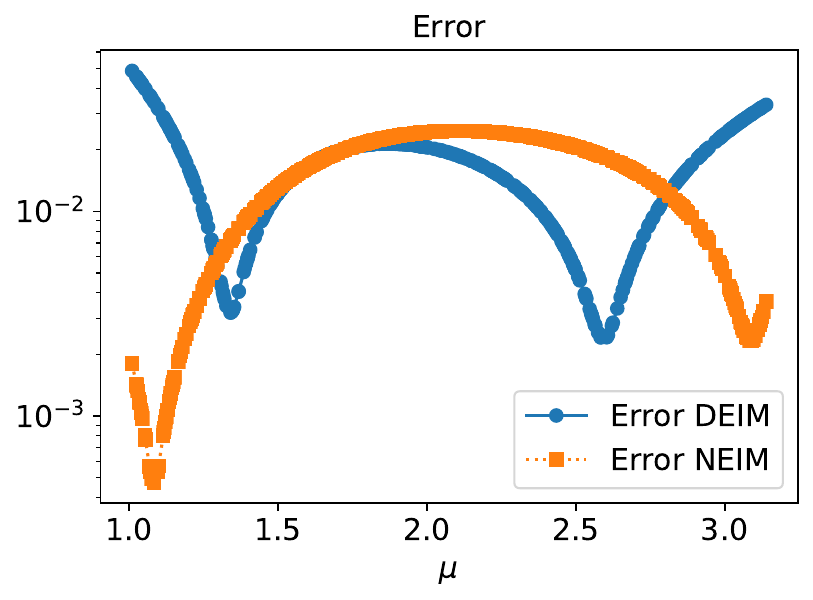} \\
(a) 1 mode & (b) 2 modes \\[6pt]
 \includegraphics[width=0.45\textwidth]{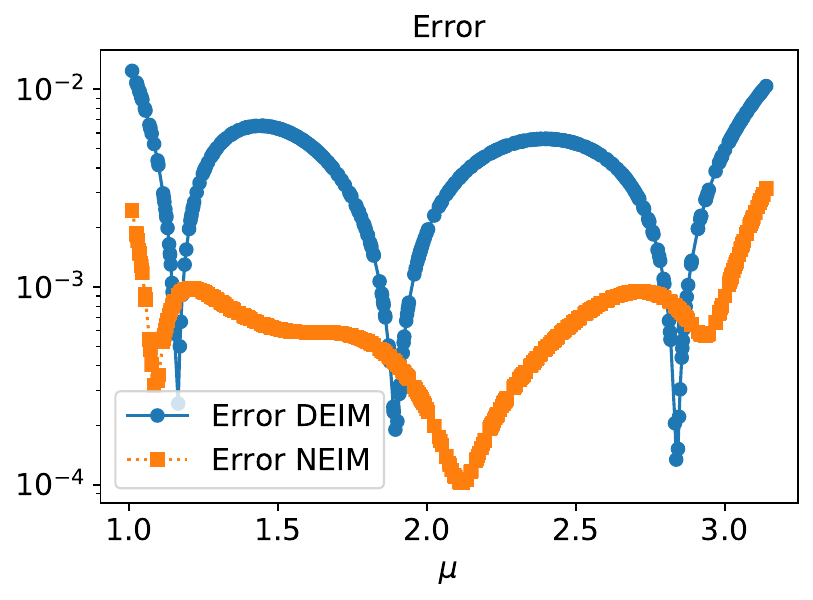} &   \includegraphics[width=0.45\textwidth]{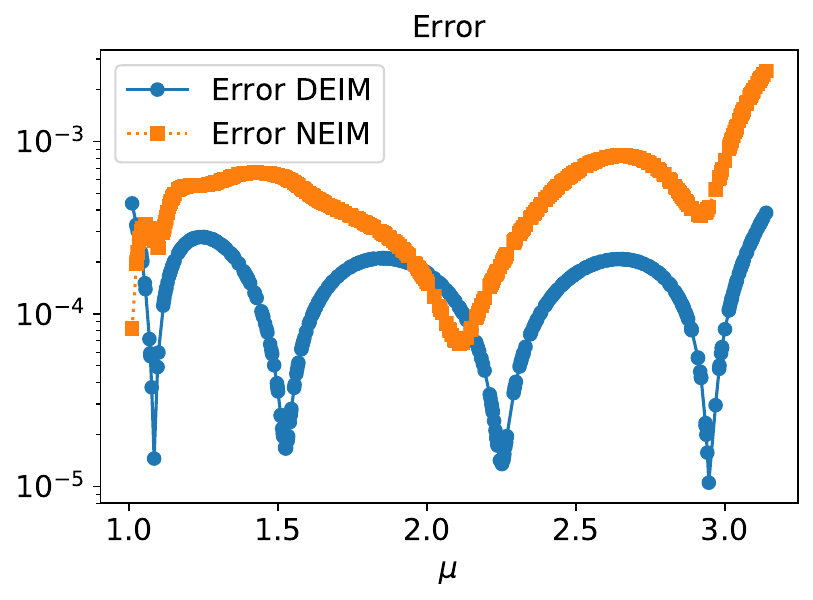} \\
(c) 3 modes & (d) 4 modes \\[6pt]
\includegraphics[width=0.45\textwidth]{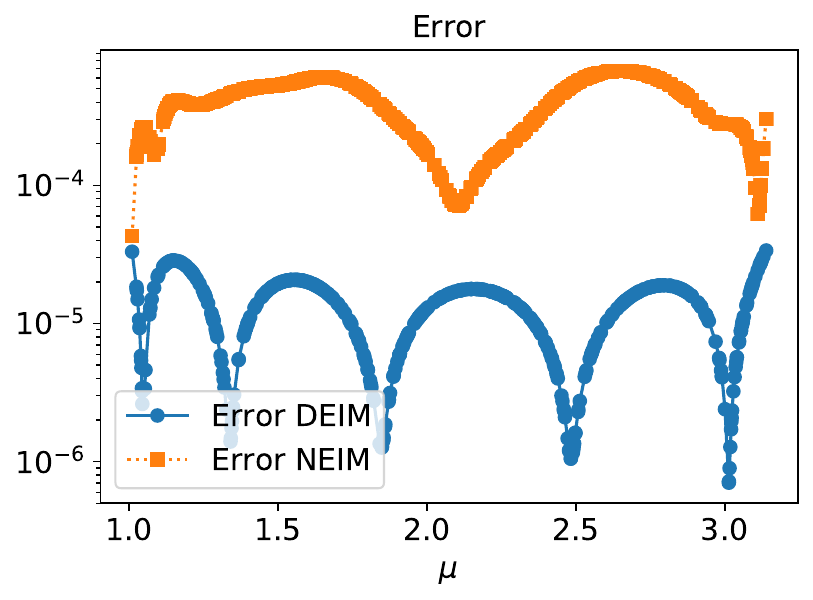} & \includegraphics[width=0.45\textwidth]{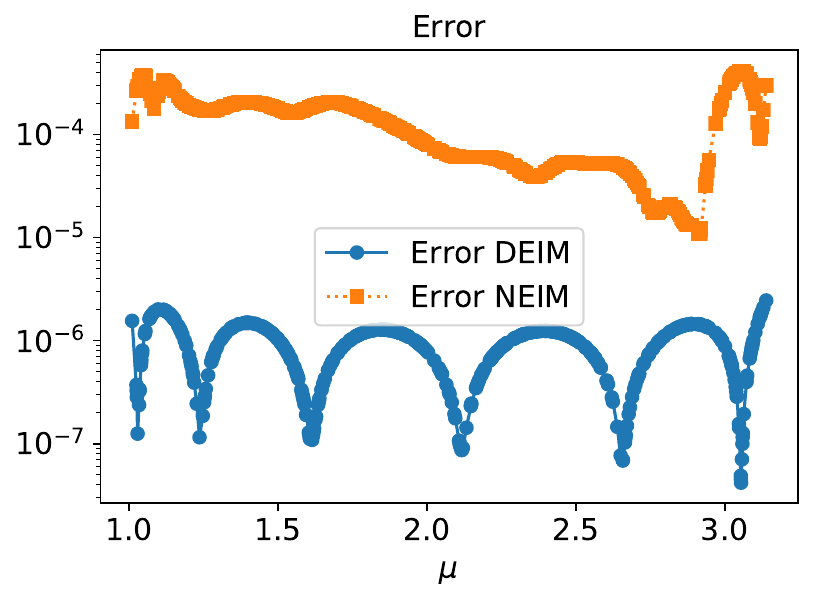}\\
(e) 5 modes & (f) 6 modes
\end{tabular}
\caption{NEIM and DEIM absolute errors by parameter for increasing number of modes in experiment \ref{subsection:solution_dependent_function_approximation}.}
\label{fig:increasing-modes-experiment-2}
\end{figure}
As before, we show in Figure \ref{fig:average-error-experiment-2} how the absolute error averaged over parameters changes with respect to the number of modes used in the NEIM and DEIM algorithms. Here we see more clearly the error decays for both NEIM and DEIM, which exhibit monotone behaviors, even though NEIM error starts flattening out.

\reviewerB{Now in Figure \ref{fig:NEIM-DEIM-modes}, we show examples of the learned modes in NEIM and DEIM. The exact nonlinearity in (a) is $U_rU_r^\top f(U_r\tilde{v}(\mu); \mu)$ for $\mu=1.72$. In (b), we see how the DEIM error is smallest at the selected interpolation points in the DEIM algorithm, as expected. Then plots (c)-(f) are the first six NEIM and DEIM modes for $\mu=1.72$ and $\mu=1.00$. In NEIM, these are each of the first six terms in the affine expansion, and in DEIM, these are the residuals approximated by each additional DEIM mode used. The NEIM and DEIM modes in (c) and (d) correspond to $\mu=1.72$, which was not one of the selected parameters in the NEIM algorithm. We see that both NEIM and DEIM have smooth approximations in the spatial domain, with primarily the first three modes in each being nonzero. Visually, we see that these modes indeed seem to add to the nonlinearity in (a). When $\mu=1$, which was the second parameter selected by the NEIM algorithm, only the first and second NEIM modes are significantly different from zero, which we expect since NEIM should almost exactly approximate the $j$th selected parameter with its first $j$ modes. It was also observed that the only significantly nonzero term in the NEIM approximation corresponding to the first selected parameter in the algorithm was the first mode.}

\reviewerA{Lastly, we compare our NEIM approximation of the nonlinearity with a simple feedforward neural network approximation of $\mu\mapsto U_r^\top f(U_r\tilde{v}(\mu); \mu)$ in Figure \ref{fig:NEIM-PODNN-comparison}. We tested feedforward networks containing more parameters than NEIM and also fewer parameters than NEIM. In both cases, NEIM performed better than or comparably to the feedforward networks tested. Thus, in this example, we see that the greedy training algorithm performs better than training everything at once with a single network.}

\begin{figure}[ht]
\begin{tabular}{cc}
  \includegraphics[width=0.45\textwidth]{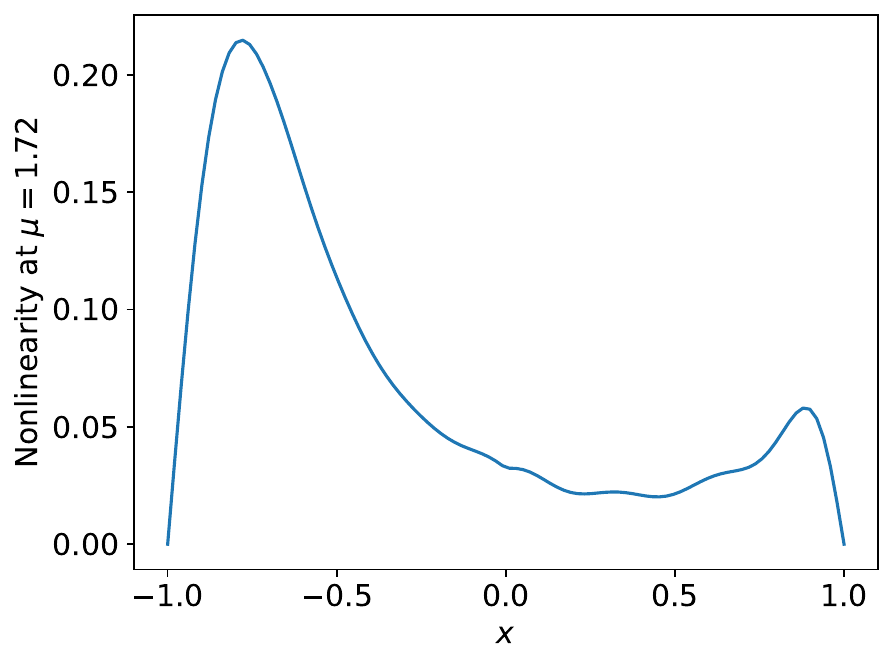} &   \includegraphics[width=0.45\textwidth]{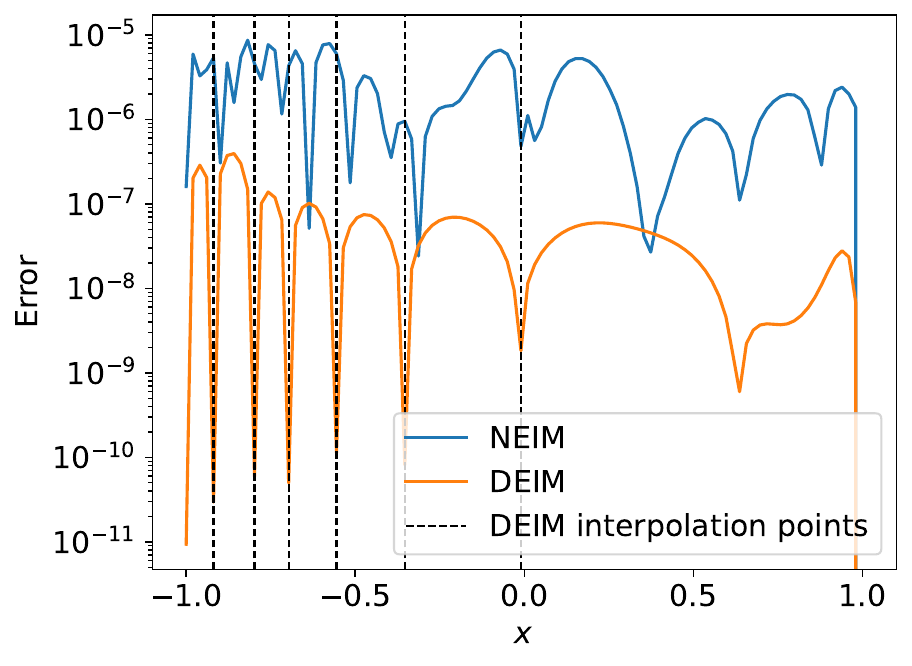} \\
\reviewerB{(a) Exact Nonlinearity $\mu=1.72$} & \reviewerB{(b) NEIM/DEIM Error $\mu=1.72$}\\[6pt]
 \includegraphics[width=0.45\textwidth]{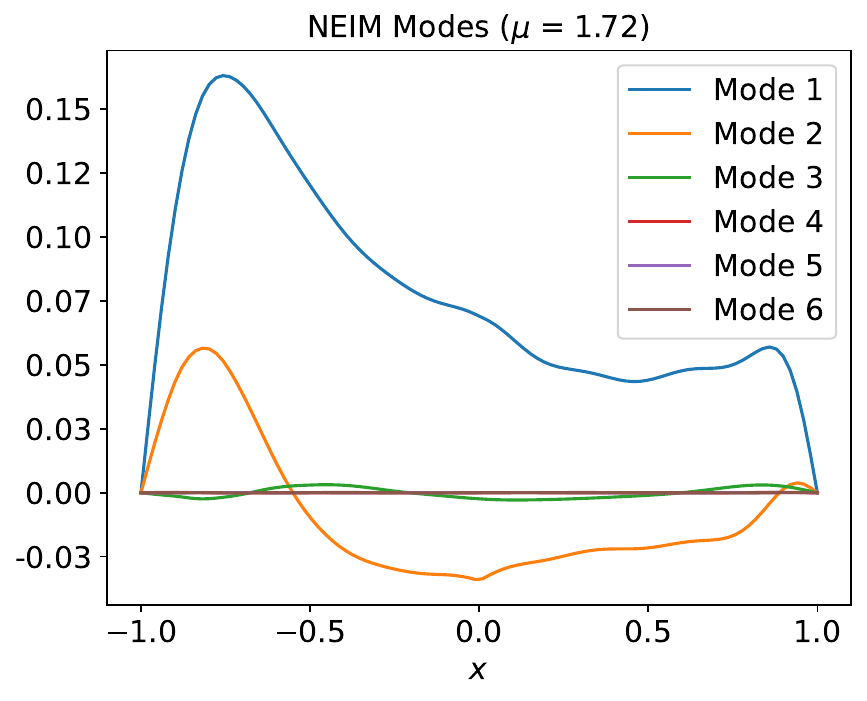} &   \includegraphics[width=0.45\textwidth]{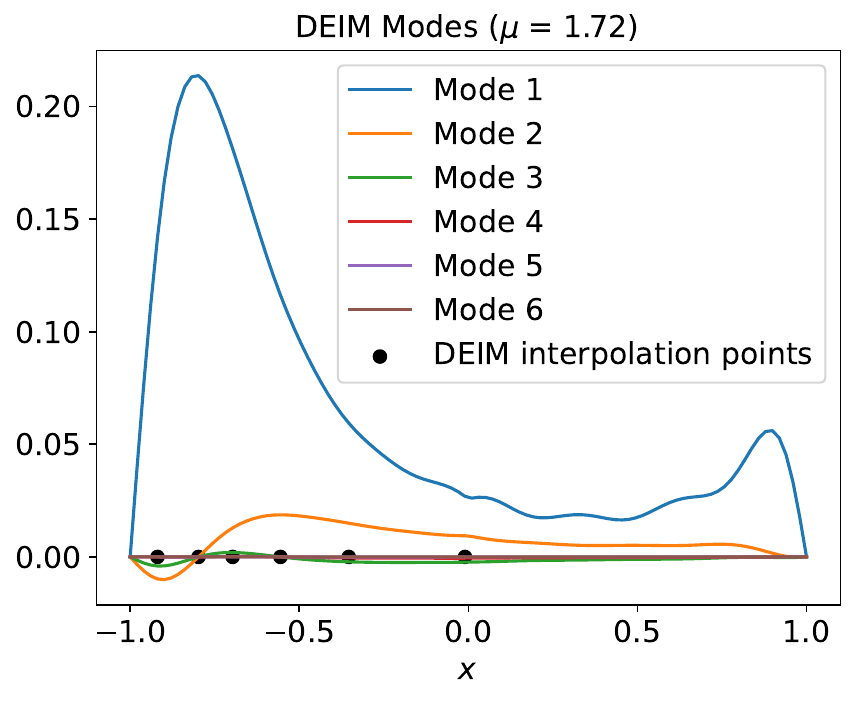} \\
\reviewerB{(c) NEIM Modes $\mu=1.72$} & \reviewerB{(d) DEIM Modes $\mu=1.72$} \\[6pt]
\includegraphics[width=0.45\textwidth]{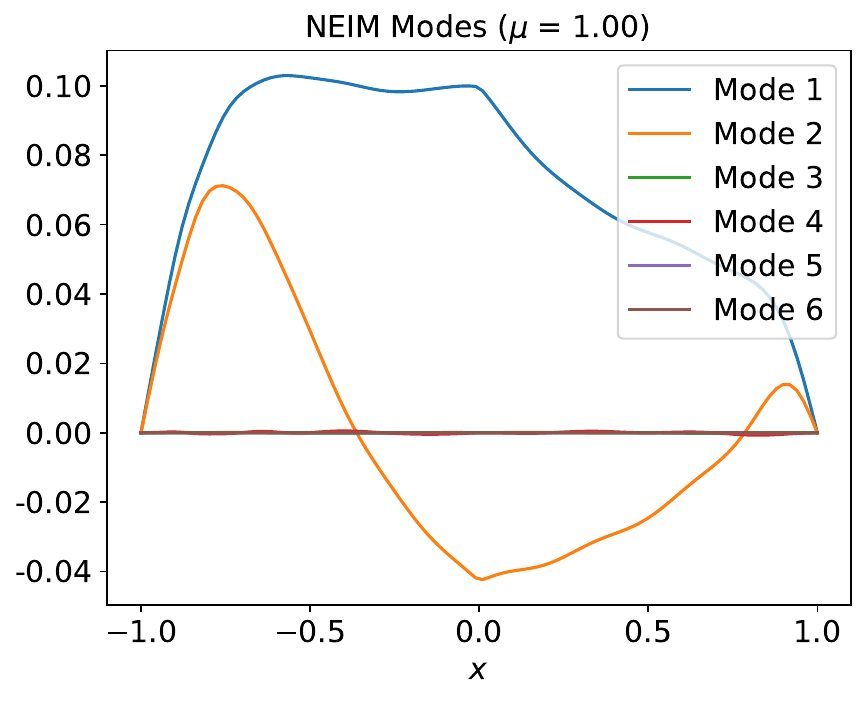} & \includegraphics[width=0.45\textwidth]{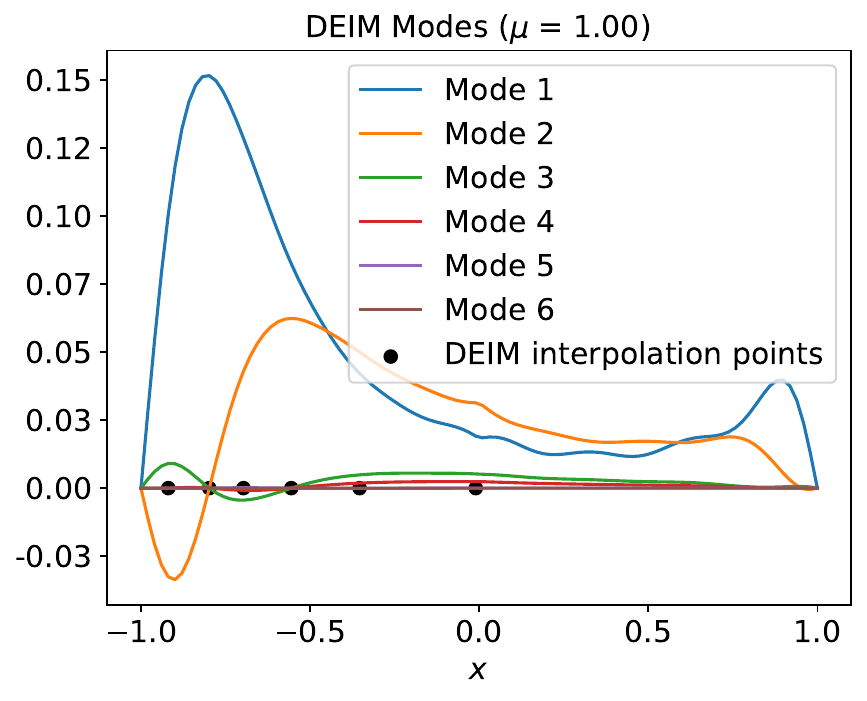}\\
\reviewerB{(e) NEIM Modes $\mu=1.00$} & \reviewerB{(f) DEIM Modes $\mu=1.00$}
\end{tabular}
\caption{\reviewerB{NEIM and DEIM modes in experiment \ref{subsection:solution_dependent_function_approximation}.}}
\label{fig:NEIM-DEIM-modes}
\end{figure}

\begin{figure}[ht]
    \centering
    \includegraphics[width=0.8\linewidth]{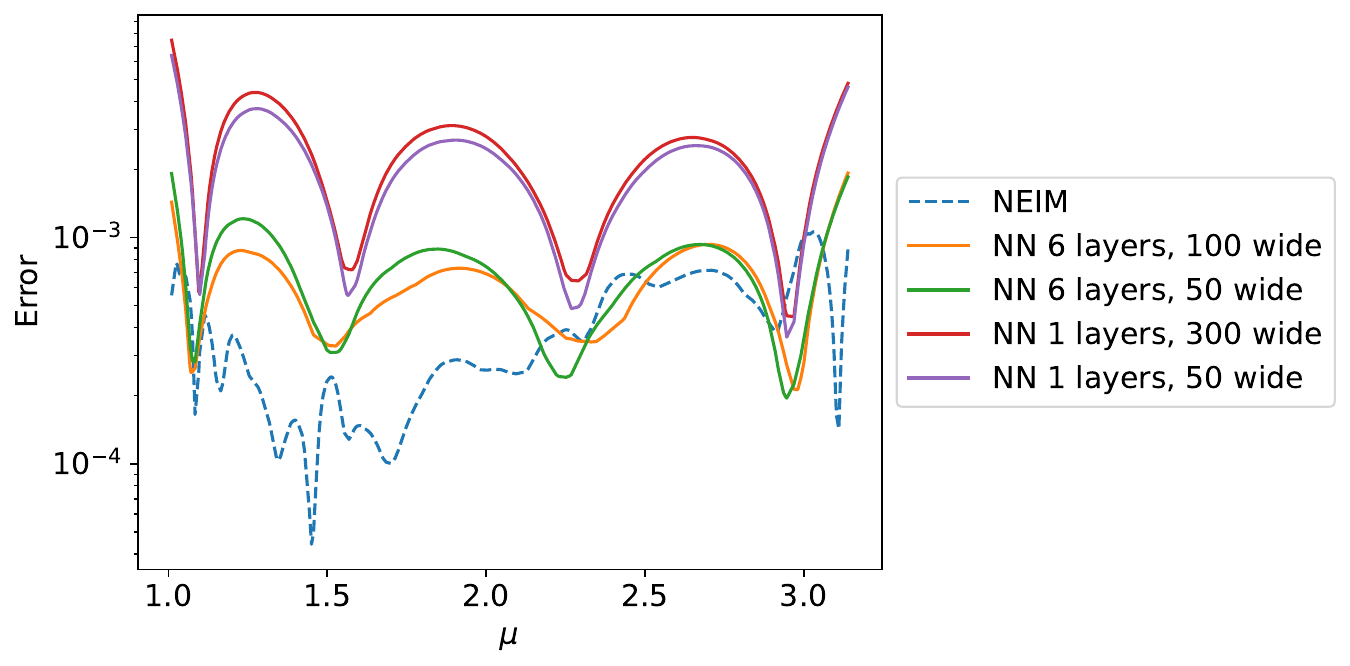}
    \caption{\reviewerA{Absolute error of NEIM approximation of nonlinearity versus basic neural network approximation for various architectures in experiment \ref{subsection:solution_dependent_function_approximation}.}}
    \label{fig:NEIM-PODNN-comparison}
\end{figure}

% \begin{figure}[ht]
%     \centering
%     \includegraphics[width=.45\textwidth]{Experiment2/error_by_modes_experiment_2.pdf}
%     \caption{NEIM and DEIM average absolute error by number of modes in experiment \ref{subsection:solution_dependent_function_approximation}.}
%     \label{fig:average-error-experiment-2}
% \end{figure}

\subsection{Nonlinear Elliptic Physics-Informed Neural Network}
\label{subsection:PINN}
We now apply NEIM in the context of physics-informed neural networks (PINNs) for reduced order models. The goal is to combine a physics-based learning within a data-driven reduced context. More specifically, one aims at learning the parameter-to-reduced vector mapping via neural network architectures that minimize \reviewerCommon{an unsupervised loss term based on the reduced residual projected onto the linear POD subspace.} \reviewerA{The setup for the problem is shown in Figure \ref{fig:PINN-NEIM-architecture}.} %joint loss given by a supervised term \reviewerCommon{which accounts} for \reviewerCommon{the} projection onto a linear POD subspace, and an unsupervised term based on the reduced residual projected onto the same base. 
\begin{figure}[ht]
    \centering
    \includegraphics[width=0.9\linewidth]{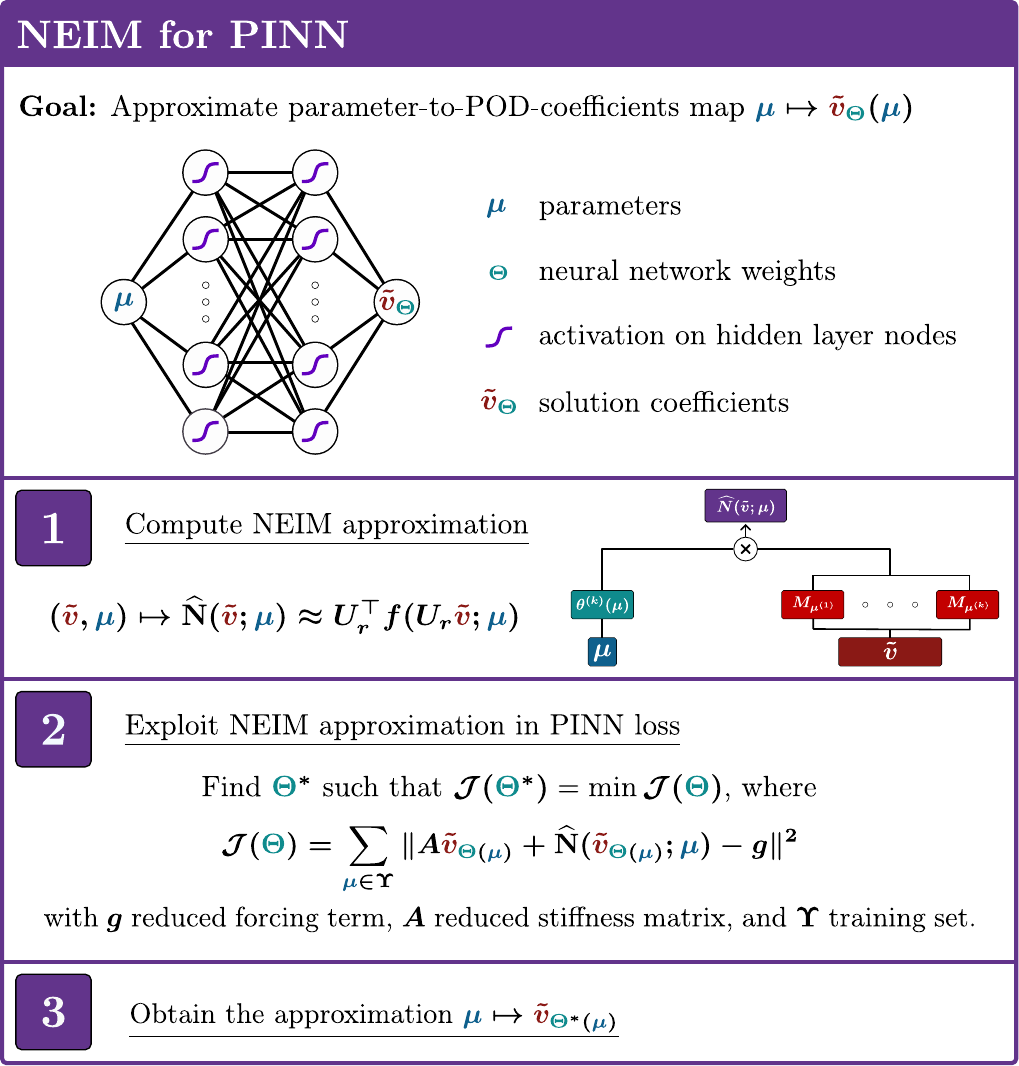}
    \caption{\reviewerA{Setting for NEIM with physics-informed neural networks in experiment \ref{subsection:PINN}.}}
    \label{fig:PINN-NEIM-architecture}
\end{figure}
For further details we \reviewerA{refer} the reader to \cite{HesthavenNonintrusiveReducedOrder2018, ChenPhysicsinformedMachineLearning2021, PichiArtificialNeuralNetwork2023}. In order to embed the reduced physics in the loss for nonlinear problems, it is important that the approximation of the nonlinearity is compatible with automatic differentiation.
When it comes to more complex nonlinearities, exploiting tensor-assembly for polynomial nonlinearities or empirical interpolation approaches can be challenging. In fact, DEIM will require \reviewerCommon{assembling} nonlinearities in the online phase of computation, which, if done in standard finite element packages like FEniCS \cite{LoggAutomatedSolutionDifferential2012}, will not be differentiable in deep learning libraries like PyTorch \cite{PyTorch}. On the \reviewerCommon{other hand}, NEIM is automatically compatible with deep learning libraries because our approach is itself a neural network with automatic differentiation included by definition.

Using physics-informed neural networks for model order reduction with NEIM also leads to a completely data-driven and interpretable method for reduced order models. DEIM, on the other hand, still requires the finite element high fidelity operators, and thus prevents its usage in combination with black-box solvers.

Let us consider a nonlinear elliptic problem in a two-dimensional spatial domain $\Omega=(0,1)^2$. The strong formulation of the problem is given by: for a given parameter $\boldsymbol{\mu}= (\mu_1, \mu_2)\in\mathcal{P}=[0.01,10.0]^2$, find $v(\boldsymbol{\mu})$ such that
\[ 
    -\Delta v(\boldsymbol{\mu})+\frac{\mu_1}{\mu_2}(\exp^{\mu_2v(\boldsymbol{\mu})}-1)=100\sin(2\pi x)\sin(2\pi y),
\]
with $v(\boldsymbol{\mu})$ satisfying homogeneous Dirichlet boundary conditions. 

We exploit the finite element method as \reviewerCommon{our} high fidelity solver to discretize the PDE. Then, we perform a POD on $m=100$ equispaced points in parameter space to obtain $U_r$. \reviewerA{In this case, our goal is to approximate $U_r^\top f(U_r\tilde v; \mu)$, where $f$ is given by $f(v; \mu)_i = \frac{\mu_1}{\mu_2}\left(\exp\left(\mu_2v_i\right) - 1\right)\Delta x_{i},\ i=1,\dots,n$,
and $\Delta x_i$ is the volume $\int \varphi_i\, d\boldsymbol{x}$ of the piecewise linear finite element basis test function $\varphi_i$ at node $i$. In other words, the nonlinearity we aim at approximating is a low order quadrature rule for the nonlinear integral term in the FE formulation of the problem given by $\int \frac{\mu_1}{\mu_2}(\exp(\mu_2 v)-1)\varphi_i\, d\boldsymbol{x} \approx \frac{\mu_1}{\mu_2}(\exp(\mu_2 v_i)-1)\int\varphi_i\, d\boldsymbol{x}$.} The weights we use for NEIM are given by $w_e(\mu_i; \mu_j) = \delta_{ij}$ and
\[
    w_t^{(k)}(\mu_i) = \begin{cases}
        1 &\text{if } \|\mu_i - \mu^{(k)}\|_2 \le 1.75,\\
        0 &\text{otherwise}.
    \end{cases}
\]

Now we train a physics-\reviewerCommon{informed} neural network using DEIM, which we denote by PINN-DEIM, to solve the problem. The physics-\reviewerCommon{informed} neural network takes in the parameter for the problem and outputs an estimate of the reduced order solution vector. It is trained by minimizing the sum of the squared norm of the discretized equation residual.
We similarly do this for a PINN using NEIM, which we call PINN-NEIM. Each iteration of training is slower for PINN-NEIM than for PINN-DEIM due to the need to backpropagate through the NEIM neural networks, but the loss generally decreases more per iteration for PINN-NEIM. For comparison between the two networks, however, we use 1000 training iterations and two hidden layers with 60 neurons each, for both methods.

We now compare the errors of the two methods on a test set of $m_{\text{test}} = 100$ uniformly random test points. The training and test sets are shown in Figure \ref{fig:train-test-points-experiment-3}. \reviewerB{We note that in a ROM context, one would like to have the discretization of the parameter space as sparse as possible. In general, if we refine our sample for the training set, we expect that the POD would capture more information and that the greedy step in the NEIM algorithm would select more representative points with large error, leading to a more accurate approximation. In this case, we expect that the trend in the error as the number of NEIM modes increases should remain the same.}
\begin{figure}[ht]
    \centering
    \includegraphics[width=.5\textwidth]{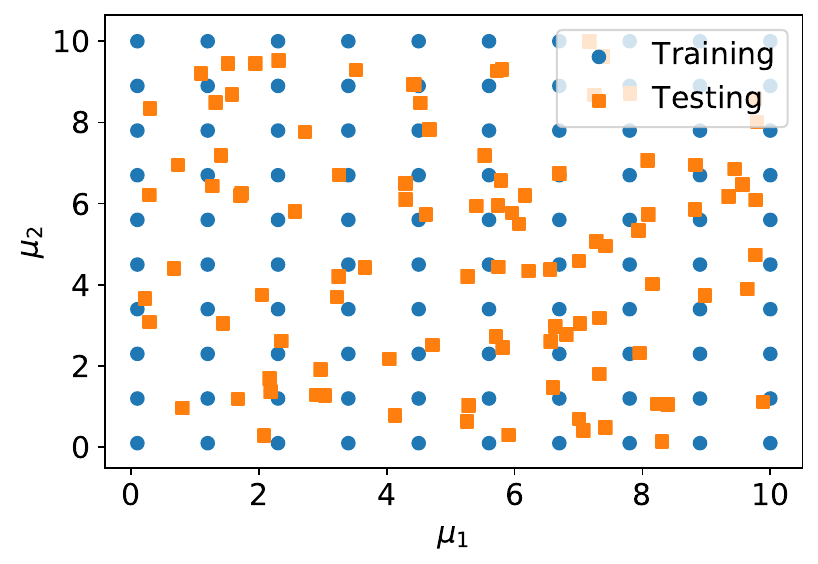}
    \caption{Training and testing parameters for experiment \ref{subsection:PINN}.}
    \label{fig:train-test-points-experiment-3}
\end{figure}

The relative error to the high fidelity solution averaged over the test set for the PINN-DEIM is about $6.12 \times 10^{-2}$, while for the PINN-NEIM, it is about $9.55 \times 10^{-3}$. Here we compute the error with the Euclidean relative error in MLniCS \cite{mlnics},
\[
    \frac{1}{m_{\text{test}}}\sum_{j=1}^{m_{\text{test}}} \frac{\|\widehat{v}(\mu_j) - v(\mu_j)\|_2}{\|v(\mu_j)\|_2}.
\]
In Figure \ref{fig:nonlinear-elliptic-solutions}, we plot the PINN-DEIM and PINN-NEIM solutions, and the error fields obtained by the difference between the approximated and the high fidelity solutions for $\mu = (5.5, 5.5)$. As we can see from the plots, the error patterns for both approaches are aligned with the physical phenomena, but the magnitude of the DEIM error is larger. Based on these results, we can see that NEIM, despite being a data-driven approach, is capable of providing meaningful recontructions of the nonlinear terms, resulting in a competitive approach with respect to DEIM in the PINN context. 
\begin{figure}[ht]
\begin{tabular}{cc}
  \includegraphics[width=0.45\textwidth]{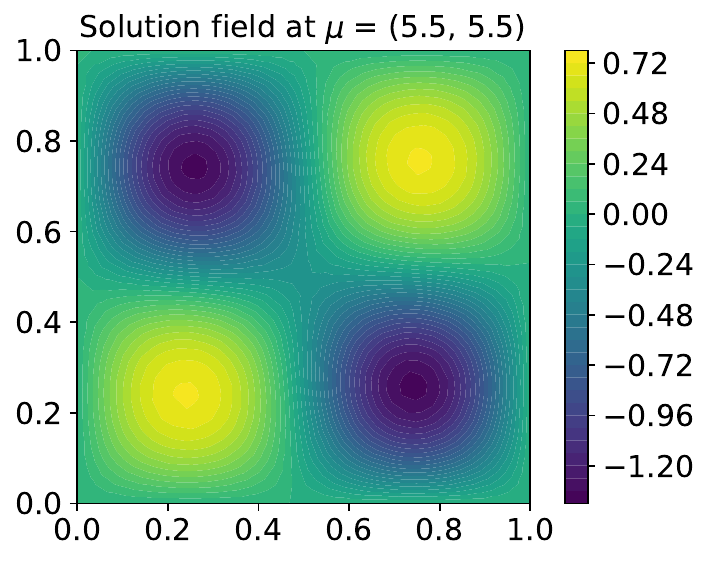} &   \includegraphics[width=0.45\textwidth]{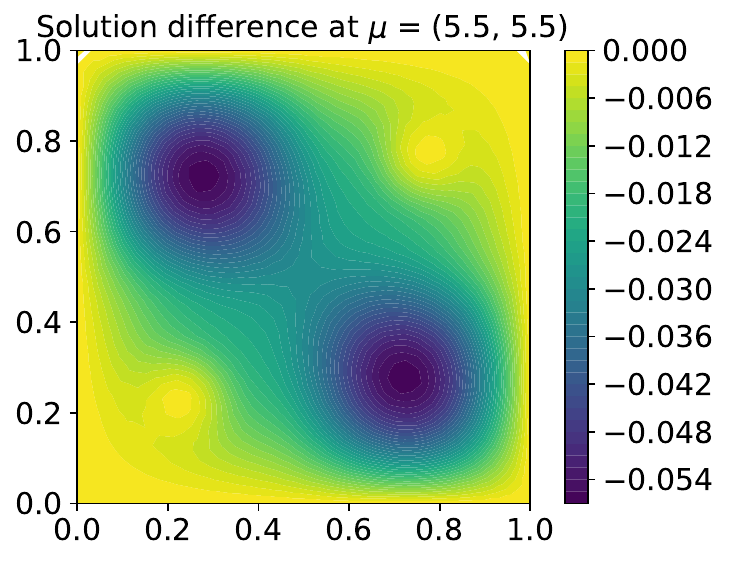} \\
(a) PINN-DEIM Solution & (b) PINN-DEIM Error \\[6pt]
 \includegraphics[width=0.45\textwidth]{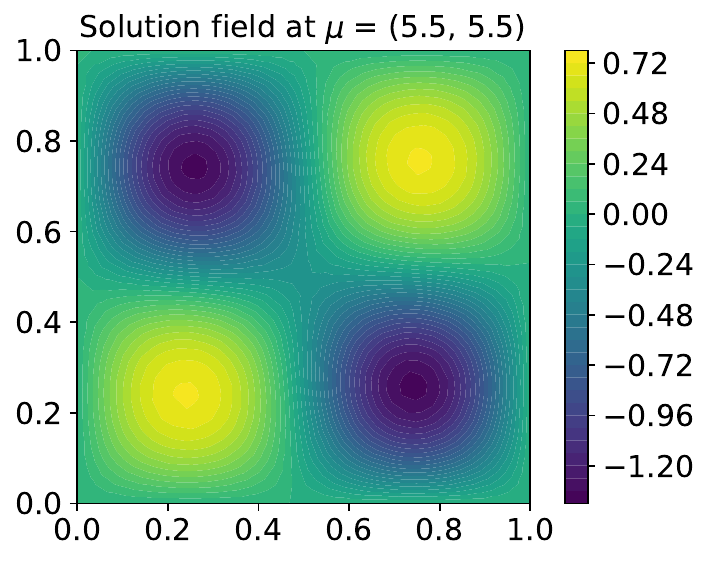} &   \includegraphics[width=0.45\textwidth]{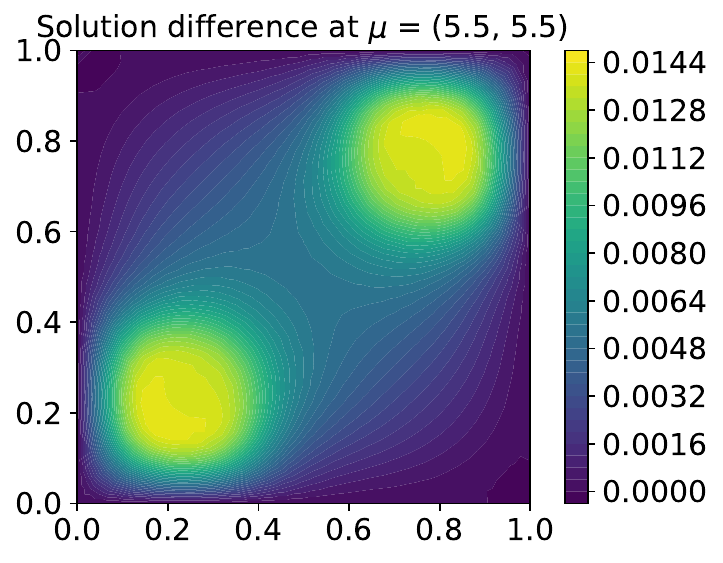} \\
(c) PINN-NEIM Solution & (d) PINN-NEIM Error \\[6pt]
\end{tabular}
\caption{PINN-DEIM and PINN-NEIM solutions and errors for $\mu=(5.5, 5.5)$ for experiment \ref{subsection:PINN}.}
\label{fig:nonlinear-elliptic-solutions}
\end{figure}

\subsection{Q-Tensor Model of Liquid Crystals Finite Elements}
\label{subsection:qtensor}
Finally, we consider a nonlinear parabolic problem in a two-dimensional spatial domain $\Omega$. The problem describes the evolution of liquid crystal molecule orientations \cite{Weber2022} by means of the coupled matrix-valued PDE:
\begin{equation}
\label{eq:qtensor}
    \begin{cases}
        Q_t &= M\left(L\Delta Q - rP(Q)\right)\\
        r_t &= \langle P(Q), Q_t\rangle_F,
    \end{cases}
\end{equation}
where the unknowns are the $2\times 2$ symmetric and trace-free Q-tensor and the scalar auxiliary variable $r$, $\langle\cdot,\cdot\rangle_F$ denotes the Frobenius inner product, and
\begin{equation}
    P(Q) = \frac{aQ - b[Q^2 - (\operatorname{tr}(Q^2)/2)I] + c\operatorname{tr}(Q^2)Q}{\sqrt{2\left(\frac{a}{2}\operatorname{tr}(Q^2) - \frac{b}{3}\operatorname{tr}(Q^3) + \frac{c}{4}\operatorname{tr}^2(Q^2) + A_0\right)}}.
\end{equation}
Because $Q$ is symmetric and trace-free \cite{Weber2022}, this problem only depends on three variables: $Q_{11}$, $Q_{12}$, and $r$. To recast the problem in the ROM setting, we consider a parameterized nonlinear term where $a\in[-0.5,0.5]$ is the varying parameter while holding the other parameters fixed at $M = 1$, $L = 0.1$, $b = 0.5$, $c = 1$, $A_0 = 500$, and we use homogeneous Dirichlet boundary conditions, and initial condition
\[
    Q_0(x, y) = dd^\top - \frac{\|d\|_2^2}{2}I,\quad d = \frac{\begin{bmatrix} (4-x^2)(4-y^2) & \sin(\pi x)\sin(\pi y)\end{bmatrix}^\top}{\sqrt{1 + [(4-x^2)(4-y^2)]^2 + \sin(\pi x)^2\sin(\pi y)^2}},
\]
where $I$ is the $2\times 2$ identity matrix. We also consider time $t\in[0,2]$ as a parameter. The parameter vector $\boldsymbol{\mu}$ is thus given by $\boldsymbol{\mu} = (t, a)$ on the parameter domain $\mathcal{P} = [0,2]\times[-0.5,0.5]$. The authors of \cite{Weber2022} show that weak solutions to \eqref{eq:qtensor} are weak solutions to the gradient flow 
\[
    Q_t = M\left(L\Delta Q - \frac{\delta\mathcal{F}_B(Q)}{\delta Q}\right),
    % \\&= M\left(L\Delta Q - \left(aQ - b\left(Q^2 - \frac12\operatorname{tr}(Q^2)I\right) + c\operatorname{tr}(Q^2)Q\right)\right),
\]
with $\frac{\delta\mathcal{F}_B(Q)}{\delta Q} =  aQ - b\left(Q^2 - \frac12\operatorname{tr}(Q^2)I\right) + c\operatorname{tr}(Q^2)Q$, so we can think of the parameter $a$ as the coefficient of the quadratic term in the quartic double-well bulk potential $\mathcal{F}_B(Q) = \frac{a}{2}\operatorname{tr}(Q^2) - \frac{b}{3}\operatorname{tr}(Q^3) + \frac{c}{4}(\operatorname{tr}(Q^2))^2$. Changing the parameter $a$ affects the depth of the two wells in the double-well potential while keeping the end behavior of the potential the same.

Due to the finite element discretization we exploit, and the complexity of the model, we need to approximate two different nonlinear terms via DEIM/NEIM to recover the solution of the problem. \reviewerA{Indeed, the time discretization for the problem is given by
\[
    \begin{cases}
        \frac{Q^{n+1} - Q^n}{\Delta t} &= M\left(\frac{L}{2}(\Delta Q^{n+1} + \Delta Q^n) - r^nP(Q^n)\right)\\
        r^{n+1} - r^n &= \langle P(Q^n), Q^{n+1}-Q^n\rangle_F,
    \end{cases}
\]
so we see that both $r^nP(Q^n)$ and $\langle P(Q^n), Q^{n+1}-Q^n\rangle_F$ must be approximated.} Denote by $\boldsymbol{Q}_{11}^n$, $\boldsymbol{Q}_{12}^n$, and $\boldsymbol{r}^n$ the vectors of solutions at time $n\Delta t$, with $\Delta t$ the time step. 

The first term to be approximated is given by
\[
    f_1(\boldsymbol{Q}_{11}^n, \boldsymbol{Q}_{12}^n, \boldsymbol{r}^n)_i = (\boldsymbol{r}^n)_i P\left(\begin{bmatrix} (\boldsymbol{Q}_{11}^n)_i & (\boldsymbol{Q}_{12}^n)_i\\ (\boldsymbol{Q}_{12}^n)_i & -(\boldsymbol{Q}_{11}^n)_i\end{bmatrix}\right)\Delta x_i,\quad i=1,\dots,n,
\]
where $\Delta x_i$ is the volume of the finite element basis test function at node $i$. 

The second approximation is for
\begin{align*}
    f_2(\boldsymbol{Q}_{11}^n, \boldsymbol{Q}_{12}^n,\boldsymbol{Q}_{11}^{n+1}, \boldsymbol{Q}_{12}^{n+1})_i = &\\ &\hspace*{-3cm}\left\langle P\left(\begin{bmatrix} (\boldsymbol{Q}_{11}^n)_i & (\boldsymbol{Q}_{12}^n)_i\\ (\boldsymbol{Q}_{12}^n)_i & -(\boldsymbol{Q}_{11}^n)_i\end{bmatrix}\right), \begin{bmatrix}
        (\boldsymbol{Q}_{11}^{n+1})_i - (\boldsymbol{Q}_{11}^n)_i & (\boldsymbol{Q}_{12}^{n+1})_i - (\boldsymbol{Q}_{12}^n)_i\\
        (\boldsymbol{Q}_{12}^{n+1})_i - (\boldsymbol{Q}_{12}^n)_i & -(\boldsymbol{Q}_{11}^{n+1})_i + (\boldsymbol{Q}_{11}^n)_i
    \end{bmatrix}\right\rangle_F
  \end{align*}
for $i=1,\dots,n$. The weights in NEIM are chosen as $w_e(\mu_i;\mu_j) = \delta_{ij}$ and\reviewerA{, denoting $\mu_i = (a_i,t_i)$,}
\[
    w_t^{(k)}(a_i, t_i) = \begin{cases}
        1 &\text{if $a_i = a_k$ and \reviewerA{$|t_i - t_k| \le 5\Delta t$}},\\
        0 &\text{otherwise},
    \end{cases}
\]
We fit the reduced order models based on snapshots obtained via a finite element scheme with time step $\Delta t = 0.025$ to final time $T=2$. The snapshots correspond to 10 equally spaced values of $a$ between $-0.5$ and $0.5$, and the solutions at each time step for each of these values of $a$, giving 800 snapshots total. DEIM and NEIM are trained on all of these snapshots but are evaluated with just 4 modes each. 

We plot the behavior of the solutions in Figure \ref{fig:liquid_crystal_solutions}. The blue vectors in the figure denote the orientation of the liquid crystal molecules at a given point, also called the director. In particular, a solution $Q(t, x, y)$ represents a distribution of orientations of liquid crystal molecules at the point $(x,y)$, so there is an average of that distribution and a measure of how aligned molecules in that distribution are with the average. If $\lambda_1(t,x,y) \ge \lambda_2(t,x,y) = -\lambda_1(t,x,y)$ are the eigenvalues of $Q(t,x,y)$ and $v_1(t,x,y), v_2(t,x,y)$ are the corresponding eigenvectors, then the blue vectors are the eigenvector $v_1(t,x,y)$. Then the orange contour in the figure denotes how much light passes through the liquid crystal at a given point in space, which is determined by the average orientation of the liquid crystal molecules $v_1(t,x,y)$ and the measure of alignment of molecules in the distribution with the average, $\lambda_1(t,x,y)$. Specifically, the color is given by a color mapping with the scalar quantity
$
    \lvert \lambda_1(t,x,y)(v_1(t,x,y)\cdot (1,0))\rvert,
$
so if the molecules are in the vertical direction or are in the isotropic liquid phase (that is, they \reviewerCommon{are not} aligned with the average orientation so that $\lambda_1(t,x,y) \approx 0$) then light \reviewerCommon{does not} pass through the liquid crystal. We see that qualitatively, the high fidelity, DEIM, and NEIM solutions all have similar contour plots, and the blue director fields are also similar. The contour for DEIM appears more similar to the high fidelity contour than NEIM compared to the high fidelity contour for time $t=0.375$, but NEIM is more similar at the final time $t=2$. In both cases, the blue director fields in the DEIM and NEIM plots well-approximate the high fidelity director field, with each plot in the equilibrium state of all horizontal directors.

\begin{figure}[ht]
\begin{center}
\begin{tikzpicture}
    \draw (-5.55, 3) node {\textbf{High fidelity}};
    \draw (-0.8, 3) node {\textbf{DEIM}};
    \draw (3.85, 3) node {\textbf{NEIM}};
    \draw (0, 0) node[inner sep=0] {\includegraphics[scale=0.52]{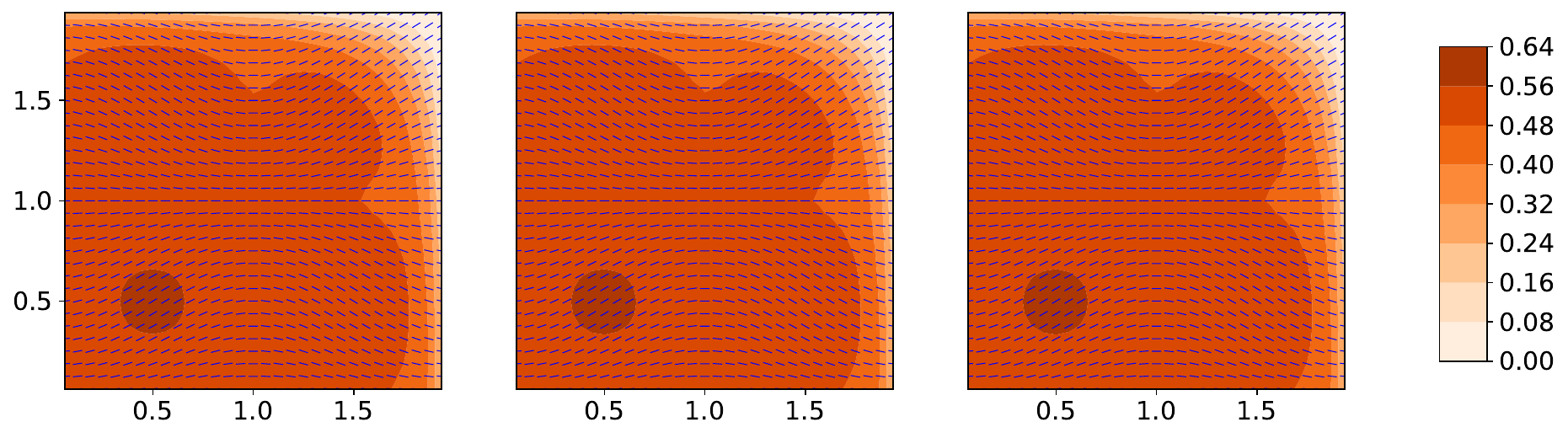}};
    \draw (0, -2.75) node {Solutions at $t=0$};
    \draw (0, -6) node[inner sep=0] {\includegraphics[scale=0.52]{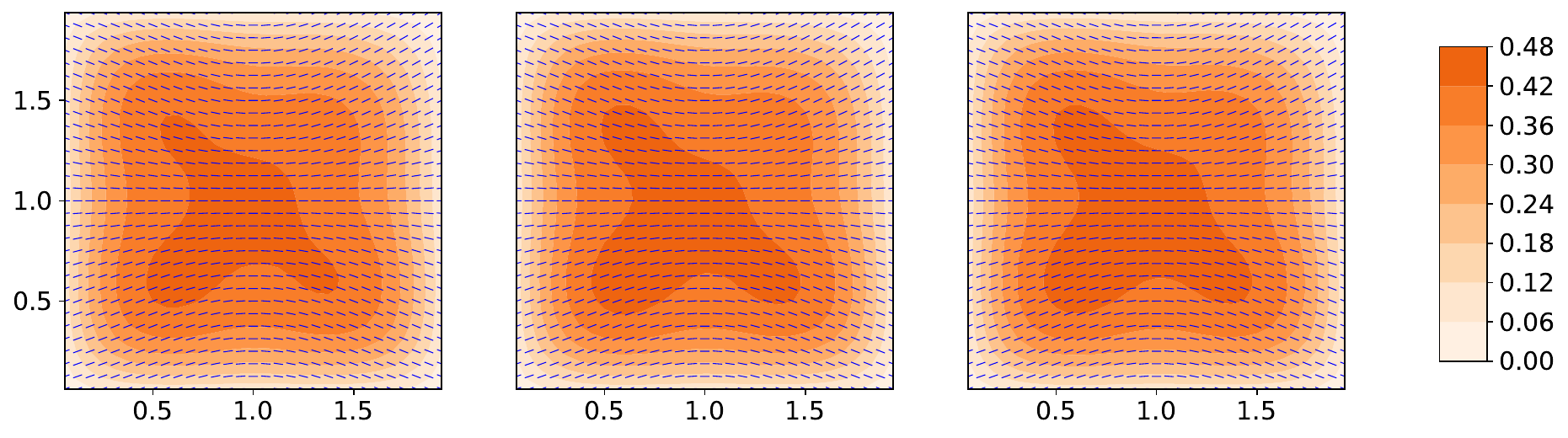}};
    \draw (0, -8.75) node {Solutions at $t=0.3$};
    \draw (0, -12) node[inner sep=0] {\includegraphics[scale=0.52]{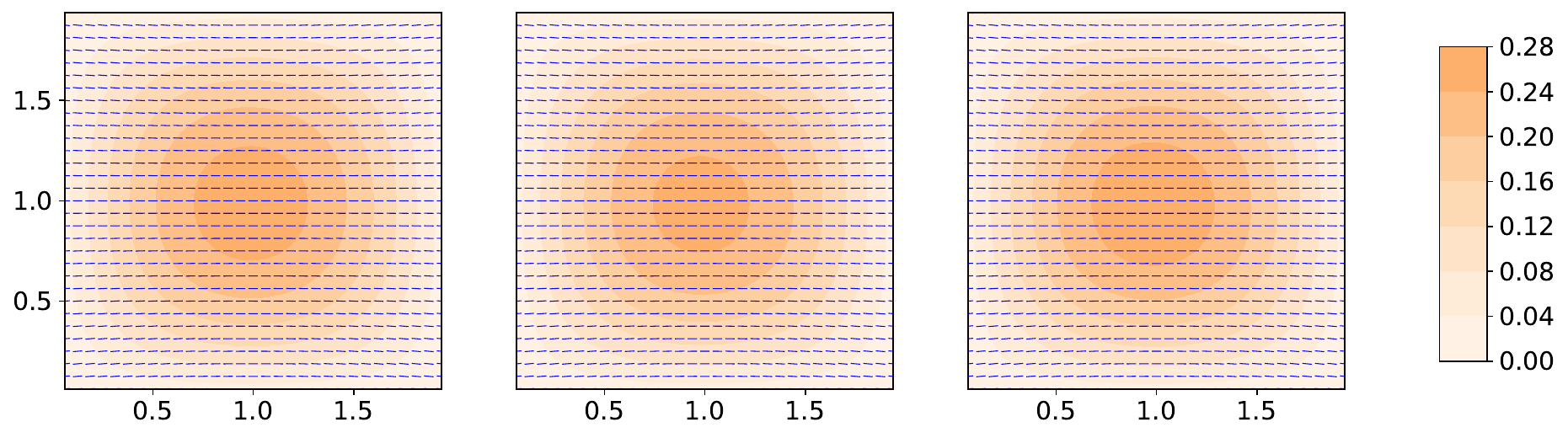}};
    \draw (0, -14.75) node {Solutions at $t=2$};
    \end{tikzpicture}
\end{center}
\caption{High fidelity, DEIM, and NEIM solutions for $a=-0.25$. The blue vectors give the eigenvector corresponding to the positive eigenvalue of the solution $Q$ (the orientation of the liquid crystal). The contour plot represents the intensity of light passing through the liquid crystal.}
\label{fig:liquid_crystal_solutions}
\end{figure}

In Figure \ref{fig:liquid_crystal_relative_errors_t}, we show the relative errors between the components of the DEIM and NEIM models using 4 modes and the high fidelity solution over time for $a=-0.25$, and we can observe that DEIM and NEIM give comparable results. The relative error for $r$ is much smaller than those of $Q_{11}$ and $Q_{12}$ because the scale of $r$ is much larger. Despite the relative error being around 10\% error in the worst case, mainly due to the complexity of the model and its time evolution, we already saw in Figure \ref{fig:liquid_crystal_solutions} that the solutions were all qualitatively the same, and we are still able to maintain similar accuracy for DEIM and NEIM in this relatively complex application.

In Figure \ref{fig:q11_relative_error_for_a}, we plot the relative error of $Q_{11}$, $Q_{12}$, and $r$ between the solutions computed by DEIM and NEIM and the high fidelity solution for different values of the parameter $a$. In all plots in the figure, DEIM and NEIM give errors which are comparable in magnitude. The error for NEIM is generally larger for $a>0$ than for $a<0$ in the plots of $Q_{11}$ and $Q_{12}$ relative error. This can be explained by the fact that the double-well potential is more complex for $a>0$. The DEIM relative error, on the other hand, decreases then increases as $a$ increases, with $a=0$ approximately the minimum error in each case. This is possibly due to the parameters close to the boundary of parameter space being more difficult to approximate. Now, in Figure \ref{fig:q11_error_contour_plot}, we plot the absolute and relative errors between the solutions $Q_{11}$ for DEIM and NEIM and the high fidelity solution for parameter pairs $(t,a)$. Qualitatively, we see that the error for NEIM is larger than that of DEIM, but the two still share a generally similar pattern in the plot of relative error, with positive $a$ and large time being worst-approximated, which reflects the sensitivity to the parameter samples, once again confirming the complexity of the model.

\begin{figure}[ht]
\begin{center}
  \includegraphics[width=.33\textwidth]{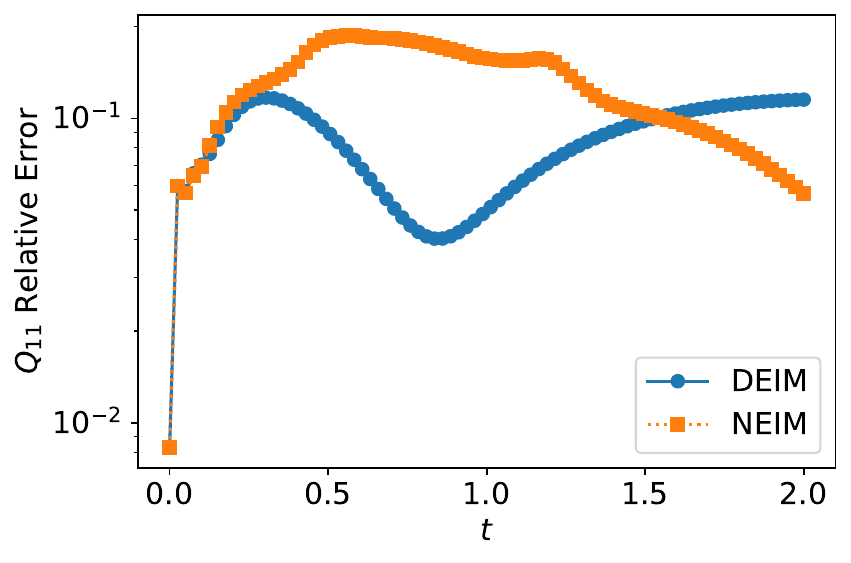}\hfill \includegraphics[width=.33\textwidth]{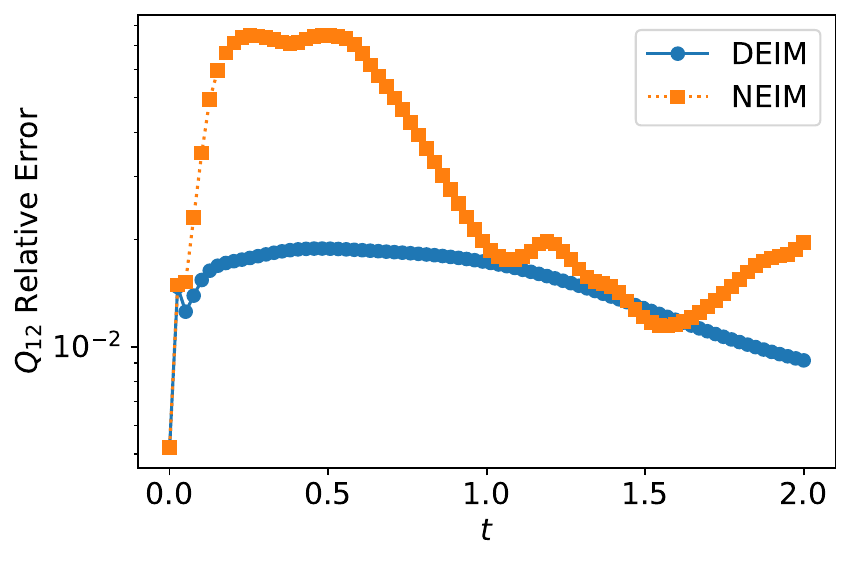}\hfill
\includegraphics[width=.33\textwidth]{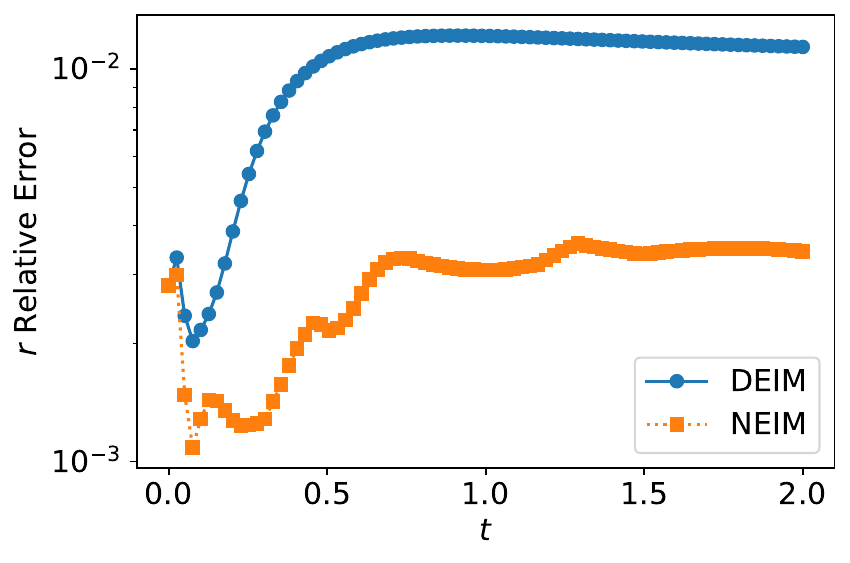}
\end{center}
\caption{Solution relative errors for $a=-0.25$.}
\label{fig:liquid_crystal_relative_errors_t}
\end{figure}

\begin{figure}[ht]
  \begin{center}
    \includegraphics[width=.33\textwidth]{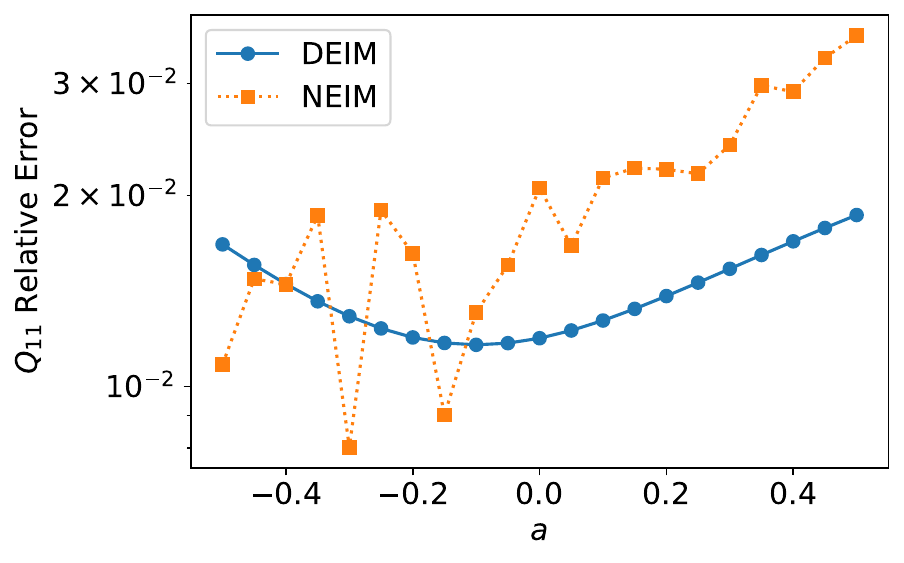}\hfill \includegraphics[width=.33\textwidth]{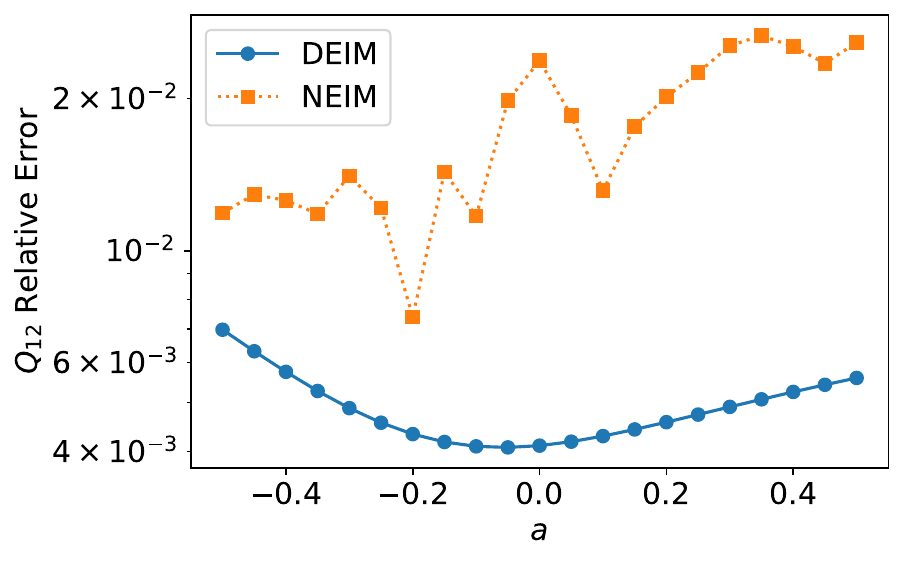}\hfill
 \includegraphics[width=.33\textwidth]{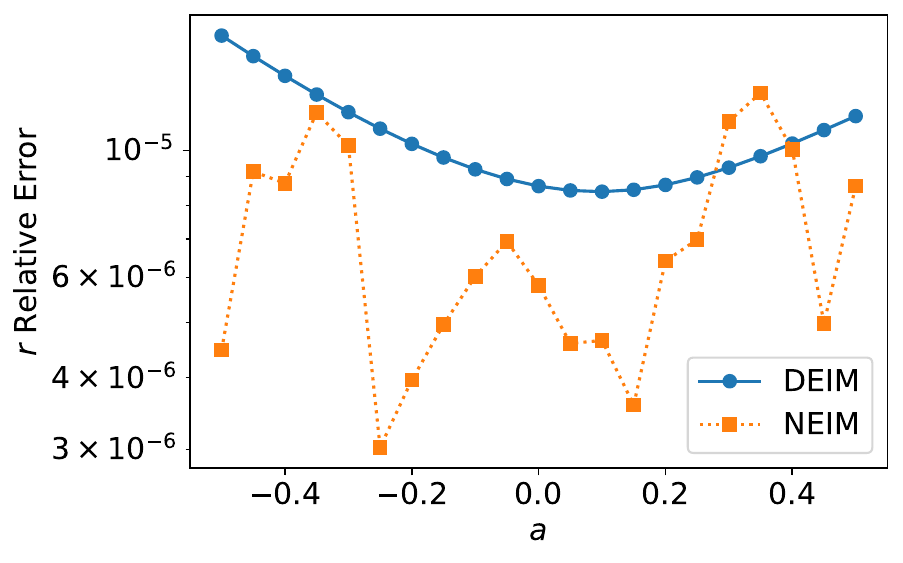}
\end{center}
    \caption{Relative errors for DEIM and NEIM by $a$.}
    \label{fig:q11_relative_error_for_a}
\end{figure}

\begin{figure}[ht]
    \centering
    \includegraphics[width=0.49\textwidth]{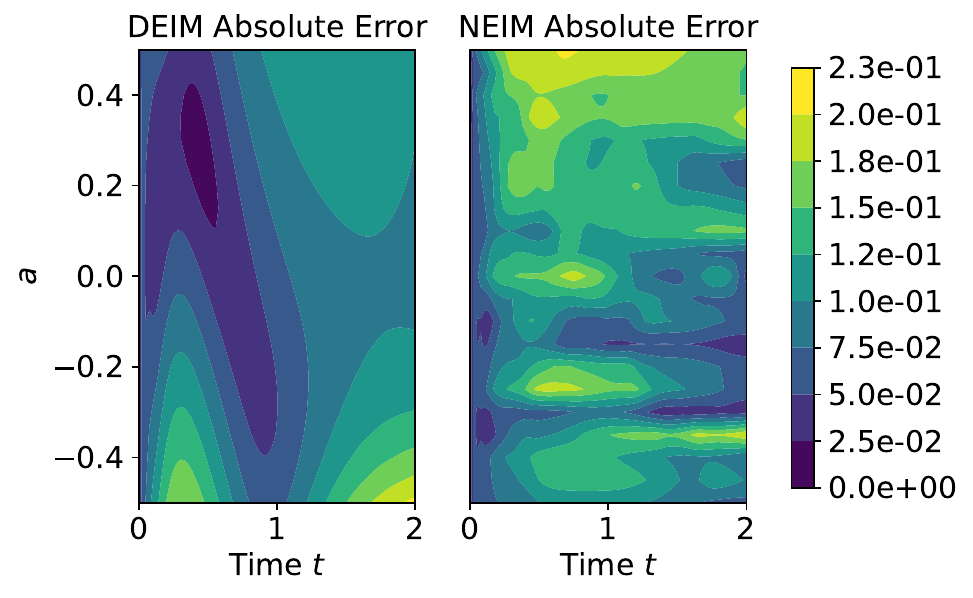}\hfill
    \includegraphics[width=0.49\textwidth]{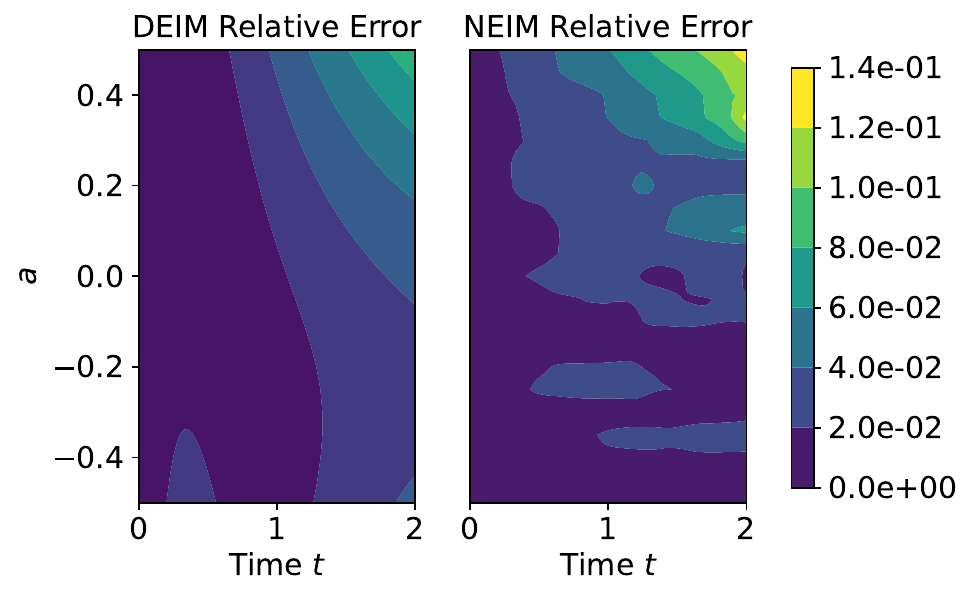}
    \caption{$Q_{11}$ absolute and relative errors for DEIM and NEIM for pairs $(t, a)$.}
    \label{fig:q11_error_contour_plot}
\end{figure}

\section{Conclusions}
\label{sec:conclusion}
The neural empirical interpolation method is a greedy method of using neural networks to approximate an affine decomposition of a nonlinearity in a reduced order model for a parameterized partial differential equation. It does this by iteratively adding to the NEIM approximation a coefficient times a neural network which fits an orthogonalized residual between the current approximation and the exact reduced nonlinearity. The coefficients are interpolated at the end of this iterative procedure. Thus, the algorithm can be thought of as a greedy method of training a DeepONet.

Because of the greedy nature of the algorithm, we were able to perform a basic error analysis of the method. We showed that the error on the training points used in the algorithm decreases as the number of terms in the approximate affine expansion increases. We also gave a decomposition of the error into a projection error, training error, and interpolation error to understand all the different sources.

We have discussed why NEIM could be preferable to the discrete empirical interpolation method  in several contexts. In particular, NEIM is a nonlinear projection, unlike DEIM which is linear. It is also efficient for nonlocal nonlinearities, unlike DEIM which works best for componentwise nonlinearities. Lastly, our method is easily compatible with automatic differentiation, but the discrete empirical interpolation method is not convenient for automatic differentiation when other software like a finite element package is needed to assemble the nonlinearity at each new evaluation.

We have shown via several numerical results that the methodology is effective when dealing with nonlinear terms coming from diverse applications. We showed it in two typical contexts arising from a finite difference discretizations of PDEs and in two more complex examples involving a physics-informed neural network for a nonlinear elliptic problem and a nonlinear parabolic PDE system. In each of these applications, NEIM was comparable to DEIM.

Lastly, because of its relation to DeepONets, NEIM lends itself to further extension and theoretical analysis. There is much further work to be done on NEIM with respect to optimally training the networks in the NEIM expansion and choosing weights in the NEIM algorithm, the latter being promising given the interpretation of the weights as being quadrature weights. Furthermore, the developed strategy could be used to explore further the interconnection between robust numerical algorithms required in the reduced order modelling context, and efficient data-driven methodologies, overcoming the difficulties related to intrusive strategies and extending the range of applicability towards more complex and real-world applications.  
\section*{Acknowledgements}
This material is based upon work supported by the National Science Foundation Graduate Research Fellowship Program under Grant No.\ DGE 2146752. Any opinions, findings, and conclusions or recommendations expressed in this material are those of the authors and do not necessarily reflect the views of the National Science Foundation. 
MH also acknowledges the ThinkSwiss Research Scholarship and the EPFL Excellence Research Scholarship. FP acknowledges the “GO for IT” program within the CRUI fund for the project “Reduced order method for nonlinear PDEs enhanced by machine learning”, and the support by Gruppo Nazionale di Calcolo Scientifico (INdAM-GNCS). This work has been conducted within the research activities of the consortium iNEST (Interconnected North-East Innovation Ecosystem), Piano Nazionale di Ripresa e Resilienza (PNRR) - Missione 4 Componente 2, Investimento 1.5 - D.D. 1058 23/06/2022, ECS00000043, supported by the European Union's NextGenerationEU program.
\bibliographystyle{plain}
\bibliography{NEIM.bib}

\end{document}